\theoremstyle{plain}
\newtheorem{theorem}{Theorem}
\newtheorem{Prop}{Proposition}
\newtheorem{Cor}{Corollary}
\def\ang#1{\left\langle#1\right\rangle}
\def\sttf2#1#2{\left[\!\!\left[#1\atop#2\right]\!\!\right]}  
\def\stf3f#1#2{\left[\!\!\left[\!\!\left[#1\atop#2\right]\!\!\right]\!\!\right]} 
\def\stff4#1#2{\left[\!\!\left[\!\!\left[\!\!\left[#1\atop#2\right]\!\!\right]\!\!\right]\!\!\right]}
\def\stss2#1#2{\left\{\!\!\left\{#1\atop#2\right\}\!\!\right\}}
\begin{document}

\title[$q$-harmonic sums on $2-\cdots-2,1-\cdots-1$ indices]{Finite $q$-multiple harmonic sums on $2-\cdots-2,1-\cdots-1$ indices}  

\author{Zikang Dong}
\address{School of Mathematical Sciences\\ Soochow University\\ Suzhou 215006\\ China} 
\email{zikangdong@gmail.com} 

\author{Takao Komatsu}
\address{Institute of Mathematics\\ Henan Academy of Sciences\\ Zhengzhou 450046\\ China;  \linebreak
Department of Mathematics, Institute of Science Tokyo, 2-12-1 Ookayama, Meguro-ku, Tokyo 152-8551, Japan}
\email{komatsu.t.al@m.titech.ac.jp;\,komatsu@zstu.edu.cn}
\thanks{T.K. is the corresponding author.}

\date{
}

\begin{abstract}
There are many results for explicit expressions about $q$-multiple zeta values or $q$-harmonic sums on $A-\cdots-A$ indices, that is, the indices are the same.  Though the way to treat $q$-multiple zeta values unless the indices are the same, it has been successful to get the explicit expression of $q$-harmonic sums on $1-\cdots-1,2,1-\cdots-1$ indices.  In this paper, we shall consider more general results when the ratio of indices of $2$ to indices of $1$ increases.
\medskip

\end{abstract}

\subjclass[2010]{Primary 11M32; Secondary 05A15, 05A19, 05A30, 11B37, 11B73}
\keywords{multiple zeta functions, $q$-Stirling numbers with higher level, complete homogeneous symmetric functions, Bell polynomials, determinant}

\maketitle

\section{Introduction}\label{sec:1}

For positive integers $s_1,s_2,\dots,s_m$, we consider the $q$-multiple harmonic sums of the form  
\begin{equation}
\mathfrak Z_n(q;;s_1,s_2,\dots,s_m):=\sum_{1\le i_1<i_2< \dots<i_m\le n-1}\frac{1}{(1-q^{i_1})^{s_1}(1-q^{i_2})^{s_2}\cdots(1-q^{i_m})^{s_m}}\,. 
\label{def:qssmzv}
\end{equation}
In the previous papers (\cite{CK,Ko25b,Ko25a,KL,KP,KW}), we used the notation 
$\mathfrak Z_n(q;m,s):=\mathfrak Z_n(q;;\underbrace{s,\dots,s}_m)$ because all positive powers (indices) are the same as $s=s_1=\cdots=s_m$. 
In this paper, one does not need to display the number of the summations $m$ because all $s_j$ are not necessarily equal. 

When $n\to\infty$ in (\ref{def:qssmzv}), the infinite version was studied by Schlesinger \cite{Schlesinger}:  
\begin{equation}
\mathfrak Z(q;;s_1,s_2,\dots,s_m):=\sum_{1\le i_1<i_2< \dots<i_m}\frac{1}{(1-q^{i_1})^{s_1}(1-q^{i_2})^{s_2}\cdots(1-q^{i_m})^{s_m}}
\label{def:qsmzv}
\end{equation}  
There are several different forms. See, e.g., \cite{Bradley,OOZ,Zhao,Zudilin}. For finite versions, see, e.g., \cite{BTT18,BTT20,Takeyama09,Tasaka21}.  

Multiplying $(1-q)^m$ and taking $q\to 1$ in (\ref{def:qsmzv}), it reduces 
the multiple zeta function of the form 
\begin{equation}
\zeta(s_1,s_2,\dots,s_m):=\sum_{1\le i_1<i_2<\dots<i_m}\frac{1}{i_1^{s_1}i_2^{s_2}\dots i_m^{s_m}}\,,
\label{def:mzv}
\end{equation}
which have been studied by many researchers, as well as its generalizations or modifications (see, e.g., \cite{Zhao_book}).  
 
Though $s_1,s_2,\dots,s_m$ are any positive integers, many researchers have studied the case $s=s_1=s_2=\cdots=s_m$, that is, all the indices are the same. The same applies to the case (\ref{def:qssmzv}) as well as its generalizations and variations, and we have succeeded in obtaining various explicit formulas for the case where all indices are the same (\cite{Ko25b,Ko25a,KL,KP,KW}). 
Some of the simplest and most fundamental results (\cite{Ko25a}) are  given as  
\begin{equation}
\mathfrak Z_n(\zeta_n;;\underbrace{1,\dots,1}_m)=\frac{1}{m+1}\binom{n-1}{m}
\label{eq:zz-1m}
\end{equation}
and as a determinant 
\begin{equation}
\mathfrak Z_n(\zeta_n;;s)=\left|\begin{array}{ccccc}
\frac{n-1}{2}&1&0&\cdots&\\ 
\frac{2}{3}\binom{n-1}{2}&\frac{n-1}{2}&1&&\vdots\\ 
\vdots&&\ddots&&0\\
\frac{s-1}{s}\binom{n-1}{s-1}&\frac{1}{s-1}\binom{n-1}{s-2}&\cdots&\frac{n-1}{2}&1\\ 
\frac{s}{s+1}\binom{n-1}{s}&\frac{1}{s}\binom{n-1}{s-1}&\cdots&\frac{1}{3}\binom{n-1}{2}&\frac{n-1}{2}\\ 
\end{array}
\right|\,.
\label{eq:zz-det2}
\end{equation}
Here, $\zeta_n=e^{2\pi\sqrt{-1}/n}$.   

However, when the indices are not uniform, it becomes difficult to handle, as one cannot directly use the properties of Stirling numbers (\cite{Ko23,Ko24}) or the expansions via generating functions of symmetric functions, and thus not many results have been obtained. Even in the case of $1-\cdots-1,2,1-\cdots-1$, which seems the simplest, although a neat conjecture has been proposed, its proof is not easy at all (\cite{BTT20}).

Nevertheless, recently in \cite{BIK}, the proof for the case of $1-\cdots-1,2,1-\cdots-1$ was finally completed. That is, for non-negative integers $a$ and $b$, we proved the following.  
For integers $n$ and $m$ with $n,m\ge 2$,  
\begin{equation}
\mathfrak Z_n(\zeta_n;;\underbrace{\underbrace{1,\dots,1}_a,2,\underbrace{1,\dots,1}_b}_m)+\mathfrak Z_n(\zeta_n;;\underbrace{\underbrace{1,\dots,1}_b,2,\underbrace{1,\dots,1}_a}_m)=-\frac{m!(n-2 m-3)}{(m+2)!}\binom{n-1}{m}\,. 
\label{eq:11112}
\end{equation} 
In other words, by taking the real part,  
\begin{equation}
\mathfrak{Re}\left(\mathfrak Z_n(\zeta_n;;\underbrace{\underbrace{1,\dots,1}_a,2,\underbrace{1,\dots,1}_b}_m)\right)=-\frac{m!(n-2 m-3)}{2(m+2)!}\binom{n-1}{m}\,.
\label{eq:11112r}
\end{equation}  
A natural question is whether a similar explicit formula can be derived if the ratio of powers (indices) of $2$ to powers (indices) of $1$ increases. In fact, it is currently difficult to provide explicit formulas for each of $\mathfrak Z(\zeta_n;;s_1,s_2,\dots,s_m)$, but it is possible to provide explicit formulas for the overall sum or the average.  
In this paper, we further develop the theory in \cite{BIK}, discuss the case of $2-\cdots-2,1-\cdots-1$, and provide some explicit expressions.  One of the typical results is Theorem \ref{th:222111}. That is, for an integer $r$ with $0\le r\le m$, we have 
$$
\sum_{\sigma\in\ang{s_1,s_2,\dots,s_m}}\mathfrak Z_n(\zeta_n;;\sigma)
=\frac{1}{(r+1)n}\binom{m}{r}\left(\binom{n-1}{m}+(-1)^r\binom{n-1}{m+r+1}\right)\,,
$$
where the sum is taken over all distinct permutations of $\{s_1,s_2,\dots,s_m\}=\{\underbrace{2,\dots,2}_{r},\underbrace{1,\dots,1}_{m-r}\}$. 

A natural and useful technique in this paper is to consider the multiple zeta values with non-zero powers in (\ref{def:qssmzv}), then calculate the product with the multiple zeta values with usual positive powers:  
$$
\mathfrak Z_n(\zeta_n;;\underbrace{A,\dots,A}_m)\mathfrak Z_n(\zeta_n;;\underbrace{B,\dots,B}_\ell)\,, 
$$  
where $A$ and $B$ are positive or negative integers.

\section{$q$-multiple zeta values with negative powers}   

 


We have already known the explicit expressions of values of $\mathfrak Z_n(\zeta_n;;\underbrace{A,\dots,A}_m)$ for $A=1,2,3$ through the way of calculations in \cite{Ko25a}.  

Now, we shall see how to calculate $\mathfrak Z_n(\zeta_n;;\underbrace{-B,\dots,-B}_\ell)$, then get the explicit expressions of their values for $B=1,2$.

\begin{theorem}  
For $n\ge \ell$, we have 
\begin{equation}
\mathfrak Z_n(\zeta_n;;\underbrace{-1,\dots,-1}_\ell)=\binom{n}{\ell} 
\label{eq:-1}
\end{equation}
and 
\begin{equation}
\mathfrak Z_n(\zeta_n;;\underbrace{-2,\dots,-2}_\ell)=\binom{n}{\ell}+2(-1)^{n-\ell-1}\binom{n}{2 n-2 \ell}\,.
\label{eq:-2}
\end{equation}
\label{th:-12}
\end{theorem}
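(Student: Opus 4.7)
The key observation is that negative indices turn the summand into a product, so the left-hand sides are elementary symmetric polynomials:
$$\mathfrak Z_n\bigl(\zeta_n;;\underbrace{-B,\dots,-B}_\ell\bigr)=e_\ell\bigl((1-\zeta_n)^B,(1-\zeta_n^2)^B,\dots,(1-\zeta_n^{n-1})^B\bigr).$$
I would therefore compute, for $B=1,2$, the generating polynomial
$$F_B(t):=\prod_{i=1}^{n-1}\bigl(1+(1-\zeta_n^i)^B t\bigr)=\sum_{\ell=0}^{n-1}\mathfrak Z_n\bigl(\zeta_n;;\underbrace{-B,\dots,-B}_\ell\bigr)t^\ell,$$
and read off the coefficient of $t^\ell$. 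The only tool required is the cyclotomic factorization $\prod_{i=1}^{n-1}(z-\zeta_n^i)=(z^n-1)/(z-1)$.

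For $B=1$ I would write $1+(1-\zeta_n^i)t=(1+t)-\zeta_n^i t$, factor out $t$ from each of the $n-1$ linear terms, and then specialize the cyclotomic identity to $z=(1+t)/t$. This collapses immediately to $F_1(t)=(1+t)^n-t^n$, which gives (\ref{eq:-1}) at once.

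For $B=2$ the plan is to use the factorization $1+a^2 t=(1+au)(1-au)$ with an auxiliary variable $u$ satisfying $u^2=-t$, treating everything as a formal polynomial in $u$. This reduces the product to two copies of $F_1$:
$$F_2(-u^2)=F_1(u)\,F_1(-u)=\bigl[(1+u)^n-u^n\bigr]\bigl[(1-u)^n-(-1)^n u^n\bigr].$$
Expanding, the leading product $(1-u^2)^n=(1+t)^n$ produces the $\binom{n}{\ell}$ contribution; the $u^{2n}$ term becomes a pure $t^n$ and contributes nothing for $\ell\le n-1$; and the two mixed terms combine into $-u^n\bigl[(-1)^n(1+u)^n+(1-u)^n\bigr]$. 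In this binomial expansion only the summands with $k\equiv n\pmod 2$ survive (the others cancel in pairs), and after the substitutions $k=2m-n$ and $u^{2m}=(-1)^m t^m$ the coefficient of $t^\ell$ becomes $2(-1)^{n+\ell+1}\binom{n}{2\ell-n}$, which equals $2(-1)^{n-\ell-1}\binom{n}{2n-2\ell}$ and is exactly the second summand in~(\ref{eq:-2}).

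The main obstacle I expect is the parity bookkeeping in the $B=2$ step: one must verify that the cross-term expansion contains only even powers of $u$ (so it really represents a polynomial in $t=-u^2$), and then track every sign factor generated by $u^{2m}=(-1)^m t^m$ and by the $(-1)^n$ introduced through $F_1(-u)$. Small sanity checks (for example $n=3,\ell=2$ giving $9$ and $n=4,\ell=2$ giving $4$, both matching the formula) are reassuring. The same factoring trick should in principle extend to $B=3$ via cube roots of unity, though the book-keeping becomes noticeably heavier, which is presumably why only $B=1,2$ are treated here.
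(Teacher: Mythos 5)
Your proposal is correct, and both computations check out (I verified the $B=1$ collapse to $(1+t)^n-t^n$ and the sign bookkeeping in the $B=2$ cross terms, including the identification $2(-1)^{n+\ell+1}\binom{n}{2\ell-n}=2(-1)^{n-\ell-1}\binom{n}{2n-2\ell}$). The route is genuinely different from the paper's, though it shares the same starting point: the paper also begins with the product $\prod_{j=1}^{n-1}\bigl(1+(1-\zeta_n^j)^sX\bigr)$ and the cyclotomic factorization via the roots $\alpha_i$ of $X^{-1}+(1-Y)^s$, but it then sums over all $n$ with weight $Y^n/n$, converts the resulting products into an alternating combination of logarithms $\log F_{s,l}(X,Y)^{(-1)^l}$, expands each logarithm as a power series, and extracts the coefficient of $X^\ell Y^n$ by a change of summation indices. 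You instead fix $n$, evaluate the product directly as a polynomial in $t$ (for $s=2$ by splitting each quadratic factor as $(1-au)(1+au)$ with $u^2=-t$, so that $F_2(-u^2)=F_1(u)F_1(-u)$), and read off the coefficient of $t^\ell$. Your argument is shorter and more elementary for these two cases, and it avoids the analytic-looking logarithmic generating function entirely; what the paper's heavier machinery buys is a uniform framework (the $F_{s,l}$ factors) designed to handle general exponents $s$, whereas your factoring trick would, as you note, require $s$-th roots of unity as auxiliary variables and increasingly delicate sign and parity bookkeeping for $s\ge 3$.
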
  
\begin{proof}  
For convenience, put 
\begin{equation}
u_r:=\frac{1}{1-\zeta_n^r}=\frac{1}{2}+\frac{\sqrt{-1}}{2}\cot\frac{r\pi}{n}\quad(1\le r\le n-1)\,.
\label{eq:ur}
\end{equation} 
Consider the generating function 
\begin{align*} 
\sum_{\ell=0}^{n-1}\sum_{1\le i_1<\dots<i_\ell\le n-1}\frac{X^\ell}{u_{i_1}^s\cdots u_{i_\ell}^s}
&=\prod_{j=1}^{n-1}\bigl(1+(1-\zeta_n^j)^s X\bigr)\\
&=X^{n-1}\prod_{j=1}^{n-1}\bigl(X^{-1}+(1-\zeta_n^j)^s\bigr)\,.  
\end{align*}
Assume $\alpha_i$ ($i=1,2,\dots,s$) are the roots such that 
\begin{equation}
\prod_{i=1}^s(\alpha_i-Y)=X^{-1}+(1-Y)^s\,. 
\label{eq:alpha}
\end{equation} 
Since 
$$
\prod_{i=1}^s(\alpha_i-1)=X^{-1}\,,
$$ 
we have  
\begin{align*}
\prod_{j=1}^{n-1}\prod_{i=1}^s(\alpha_i-\zeta_n^j)=\prod_{i=1}^s\frac{\alpha_i^n-1}{\alpha^i-1}=X\prod_{i=1}^s(\alpha_i^n-1)\,. 
\end{align*} 
Thus, 
\begin{align}
&\sum_{n=1}^\infty\frac{Y^n}{n}\sum_{\ell=0}^{n-1}\sum_{1\le i_1<\dots<i_\ell\le n-1}\frac{X^\ell}{u_{i_1}^s\cdots u_{i_\ell}^s}\notag\\
&=\sum_{n=1}^\infty\frac{Y^n}{n}X^{n-1}X\prod_{i=1}^s(\alpha_i^n-1)\notag\\
&=\sum_{n=1}^\infty\frac{(X Y)^n}{n}\left((-1)^s+\sum_{l=1}^s(-1)^{s-l}\sum_{1\le i_1<\dots<i_l\le s}(\alpha_{i_1}\cdots\alpha_{i_l})^n\right)\notag\\
&=(-1)^{s-1}\left(-\sum_{n=1}^\infty\frac{(X Y)^n}{n}+\sum_{l=1}^s(-1)^{l-1}\frac{\sum_{1\le i_1<\dots<i_l\le s}(\alpha_{i_1}\cdots\alpha_{i_l}X Y)^n}{n}\right)\notag\\
&=(-1)^{s-1}\left(\log(1-X Y)+\sum_{l=1}^s(-1)^l\sum_{1\le i_1<\dots<i_l\le s}\log(1-\alpha_{i_1}\cdots\alpha_{i_l}X Y)\right)\notag\\
&=(-1)^{s-1}\log\left(\prod_{l=0}^s F_{s,l}(X,Y)^{(-1)^l}\right)\,,
\label{eq:s123} 
\end{align} 
where $F_{s,l}(X,Y)=\prod_{1\le i_1<\dots<i_l\le s}(1-\alpha_{i_1}\cdots\alpha_{i_l}X Y)$ ($1\le l\le s$) with $F_{0,l}(X,Y)=1-X Y$. 

Now, set $s=1$. By $\alpha_1=1+X^{-1}$ in (\ref{eq:alpha}), we get $F_{1,0}(X,Y)=1-X Y$ and $F_{1,1}(X,Y)=1-(X+1)Y$. Then the right-hand side of (\ref{eq:s123}) is given as 
\begin{align*}
&\log\frac{F_{1,0}(X,Y)}{F_{1,1}(X,Y)}=\log\frac{1-X Y}{1-(X+1)Y}\\
&=-\log\left(1-\frac{Y}{1-X Y}\right)\\
&=\sum_{m=1}^\infty\frac{1}{m}\left(\frac{Y}{1-X Y}\right)^m\\
&=\sum_{m=1}^\infty\frac{Y^m}{m}\sum_{j=0}^\infty\binom{m+j-1}{m-1}X^j Y^j\\
&=\sum_{n=1}^\infty\sum_{\ell=0}^{n-1}\frac{1}{n-\ell}\binom{n-1}{n-\ell-1}X^\ell Y^n\,. 
\end{align*}
Here, we put $\ell=j$ and $n=m+j$. 
Comparing the coefficients on both sides, we can get 
\begin{align*}
\sum_{1\le i_1<\dots<i_\ell\le n-1}\frac{1}{u_{i_1}\cdots u_{i_\ell}}
&=\frac{n}{n-\ell}\binom{n-1}{n-\ell-1}\\
&=\binom{n}{n-\ell}=\binom{n}{\ell}\,. 
\end{align*} 

Next, set $s=2$. By $\alpha_1\alpha_2=1+X^{-1}$ and $\alpha_1\alpha_2=2$ in (\ref{eq:alpha}), we get $F_{2,0}(X,Y)=1-X Y$, $F_{2,1}(X,Y)=(1-X Y)^2+x Y^2$ and $F_{2,2}(X,Y)=1-(X+1)Y$. Then the right-hand side of (\ref{eq:s123}) is given as 
\begin{align*}
&-\log\frac{F_{2,0}(X,Y)F_{2,2}(X,Y)}{F_{2,1}(X,Y)}\\
&=-\log\frac{(1-X Y)\bigl(1-(X+1)Y\bigr)}{(1-X Y)^2+X Y^2}\\
&=-\log\left(1-\frac{Y}{1-X Y}\right)+\log\left(1+\frac{X Y^2}{(1-X Y)^2}\right)\\
&=\sum_{m=1}^\infty\frac{1}{m}\left(\frac{Y}{1-X Y}\right)^m+\sum_{m=1}^\infty\frac{(-1)^{m-1}}{m}\left(\frac{X Y^2}{(1-X Y)^2}\right)^m\\
&=\sum_{m=1}^\infty\frac{Y^m}{m}\sum_{j=0}^\infty\binom{m+j-1}{m-1}X^j Y^j
+\sum_{m=1}^\infty\frac{(-1)^{m-1}X^m Y^{2 m}}{m}\sum_{j=0}^\infty\binom{2 m+j-1}{2 m-1}X^j Y^j\\
&=\sum_{n=1}^\infty\sum_{\ell=0}^{n-1}\frac{1}{n-\ell}\binom{n-1}{n-\ell-1}X^\ell Y^n 
+\sum_{n=2}^\infty\sum_{\ell=0}^{n-1}\frac{(-1)^{n-\ell-1}}{n-\ell}\binom{n-1}{2 n-2\ell-1}X^\ell Y^n\,. 
\end{align*} 
Here, we put $\ell=j$ and $n=m+j$ in the first term and $\ell=m+j$ and $n=2 m+j$ in the second term. 
Comparing the coefficients on both sides, we can get 
\begin{align*}
\sum_{1\le i_1<\dots<i_\ell\le n-1}\frac{1}{u_{i_1}^2\cdots u_{i_\ell}^2}
&=\frac{n}{n-\ell}\binom{n-1}{n-\ell-1}+\frac{(-1)^{n-\ell}n}{n-\ell}\binom{n-1}{2 n-2\ell-1}\\
&=\binom{n}{n-\ell}+2(-1)^{n-\ell-1}\binom{n}{2 n-2\ell}\,. 
\end{align*} 
\end{proof}

\section{Identity between several $q$-multiple zeta values with different indices}  

For convenience, put  
$$
\mathcal Y_n(s_1,\dots,s_m):=\sum_{\sigma\in\ang{s_1,s_2,\dots,s_m}}\mathfrak Z_n(\zeta_n;;\sigma)\,,
$$ 
where the sum is taken over all distinct permutations of $\{s_1,s_2,\dots,s_m\}$. Asssume that $\mathcal Y_n(s_1,\dots,s_m)=0$ if $m=0$. 
 
For example, 
\begin{align*}
\mathcal Y_n(2,2,2,1,1)&=\mathfrak Z_n(\zeta_n;;2,2,2,1,1)+\mathfrak Z_n(\zeta_n;;2,2,1,2,1)+\mathfrak Z_n(\zeta_n;;2,1,2,2,1)\\
&\quad +\mathfrak Z_n(\zeta_n;;1,2,2,2,1)+\mathfrak Z_n(\zeta_n;;2,2,1,1,2)+\mathfrak Z_n(\zeta_n;;2,1,2,1,2)\\
&\quad +\mathfrak Z_n(\zeta_n;;1,2,2,1,2)+\mathfrak Z_n(\zeta_n;;2,1,1,2,2)+\mathfrak Z_n(\zeta_n;;1,2,1,2,2)\\
&\quad +\mathfrak Z_n(\zeta_n;;1,1,2,2,2)\,.
\end{align*}  

The main result of this section is the following.  Here, when the length of the string block is non-positive, such a string does not appear.   

\begin{theorem}  
For integers $n$, $m$ and $\ell$ with $n-1\ge m\ge\ell\ge 0$, we have  
\begin{align*}
&\sum_{j=0}^{n-m-2}\binom{n-m+\ell-3-2 j}{\ell-1-j}\mathcal Y_n(\underbrace{2,\dots,2}_{m-\ell+1+j},\underbrace{1,\dots,1}_{n-m+\ell-3-2 j})\\ 
&=\frac{1}{n(m+1)}\binom{n-1}{m}\left(\binom{n}{\ell}-\binom{m+1}{\ell}\right)\,.
\end{align*}
\label{th:5}
\end{theorem}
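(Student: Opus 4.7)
The plan is to evaluate the product $\mathfrak Z_n(\zeta_n;;\underbrace{1,\dots,1}_m)\cdot\mathfrak Z_n(\zeta_n;;\underbrace{1,\dots,1}_{n-\ell-1})$ in two different ways and equate the results. On the one hand, applying (\ref{eq:zz-1m}) together with the elementary identity $\frac{1}{n-\ell}\binom{n-1}{\ell}=\frac{1}{n}\binom{n}{\ell}$ yields immediately
$$\mathfrak Z_n(\zeta_n;;\underbrace{1,\dots,1}_m)\cdot\mathfrak Z_n(\zeta_n;;\underbrace{1,\dots,1}_{n-\ell-1})=\frac{1}{n(m+1)}\binom{n-1}{m}\binom{n}{\ell}.$$

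On the other hand, set $u_r=1/(1-\zeta_n^r)$, so that $\mathfrak Z_n(\zeta_n;;\underbrace{1,\dots,1}_a)=e_a(u_1,\dots,u_{n-1})$ is the $a$-th elementary symmetric polynomial. Expanding $e_m\cdot e_{n-\ell-1}$ as a sum over pairs $(I,J)$ of subsets of $\{1,\dots,n-1\}$ with $|I|=m$, $|J|=n-\ell-1$, and grouping by $r=|I\cap J|$, one sees that elements of $I\cap J$ contribute $u_k^2$ while those of $I\triangle J$ contribute $u_k$. For fixed disjoint sets $R$ of size $r$ and $R'$ of size $m+n-\ell-1-2r$, the number of pairs $(I,J)$ with $I\cap J=R$ and $I\triangle J=R'$ equals $\binom{m+n-\ell-1-2r}{m-r}$, the number of ways to split $R'$ between $I\setminus J$ and $J\setminus I$. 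This yields the quasi-shuffle-type identity
$$\mathfrak Z_n(\zeta_n;;\underbrace{1,\dots,1}_m)\cdot\mathfrak Z_n(\zeta_n;;\underbrace{1,\dots,1}_{n-\ell-1})=\sum_{r\ge0}\binom{m+n-\ell-1-2r}{m-r}\mathcal Y_n(\underbrace{2,\dots,2}_r,\underbrace{1,\dots,1}_{m+n-\ell-1-2r}).$$
Substituting $r=m-\ell+1+j$ matches the theorem's sum exactly, except for a single extra boundary term at $r=m-\ell$ (corresponding to $j=-1$); terms with $r<m-\ell$ vanish automatically, because $\mathcal Y_n$ requires $r+s\le n-1$.

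For that boundary term the condition $r+s=n-1$ forces the index set to be $\{1,\dots,n-1\}$, so
$$\mathcal Y_n(\underbrace{2,\dots,2}_{m-\ell},\underbrace{1,\dots,1}_{n-m+\ell-1})=\Bigl(\prod_{k=1}^{n-1}u_k\Bigr)\,e_{m-\ell}(u_1,\dots,u_{n-1})=\frac{1}{n^2}\binom{n}{m-\ell+1},$$
using $\prod_{k=1}^{n-1}(1-\zeta_n^k)=n$ and (\ref{eq:zz-1m}). A direct factorial computation confirms $\binom{n-m+\ell-1}{\ell}\binom{n}{m-\ell+1}=\frac{n}{m+1}\binom{n-1}{m}\binom{m+1}{\ell}$, so the boundary contribution equals $\frac{1}{n(m+1)}\binom{n-1}{m}\binom{m+1}{\ell}$. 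Subtracting it from the closed-form evaluation of the product yields the claimed right-hand side $\frac{1}{n(m+1)}\binom{n-1}{m}\bigl(\binom{n}{\ell}-\binom{m+1}{\ell}\bigr)$.

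The main obstacle is the careful bookkeeping in the product expansion, specifically deriving the multiplicity $\binom{m+n-\ell-1-2r}{m-r}$ and recognizing that the discrepancy between the full expansion and the theorem's sum reduces to the single boundary term where the index set saturates $\{1,\dots,n-1\}$; once this is identified, the evaluation via $\prod(1-\zeta_n^k)=n$ and the final binomial identity are routine.
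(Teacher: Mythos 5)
Your proof is correct, and it reaches the identity by a route that is dual to, but not identical with, the paper's. The paper evaluates $\mathfrak Z_n(\zeta_n;;\underbrace{1,\dots,1}_m)\,\mathfrak Z_n(\zeta_n;;\underbrace{-1,\dots,-1}_\ell)$: the pairs with $J\subseteq I$ produce the separate term $\binom{n-m+\ell-1}{n-m-1}\mathfrak Z_n(\zeta_n;;\underbrace{1,\dots,1}_{m-\ell})$ of Proposition \ref{prp:5}, the remaining pairs produce the $\mathcal Y_n$-sum after multiplying and dividing by $u_1\cdots u_{n-1}$, and the closed form then needs \eqref{eq:zz-1m} together with the negative-index evaluation \eqref{eq:-1}. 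You instead take the product $\mathfrak Z_n(\zeta_n;;\underbrace{1,\dots,1}_m)\,\mathfrak Z_n(\zeta_n;;\underbrace{1,\dots,1}_{n-\ell-1})$, which equals $\tfrac1n$ times the paper's product because $e_{n-1-\ell}(u)=\bigl(\prod_k u_k\bigr)e_\ell(u^{-1})=\tfrac1n\,e_\ell(u^{-1})$; under this duality the paper's ``contained'' case becomes exactly your saturated boundary term $r=m-\ell$ (where $r+s=n-1$ forces the full index set), and your evaluation of it agrees with the paper's first term up to the overall factor $\tfrac1n$. What your version buys is economy: the expansion you use is precisely the quasi-shuffle of Proposition \ref{prp:ml11} (with second length $n-\ell-1$), so Theorem \ref{th:5} falls out of Theorem \ref{th:ml11} plus the single boundary evaluation, and you never need Theorem \ref{th:-12} or the negative-index sums at all --- only $\prod_{k=1}^{n-1}(1-\zeta_n^k)=n$ and \eqref{eq:zz-1m}. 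The paper's formulation, on the other hand, generalizes more readily to the later sections where genuinely negative exponents like $-2$ enter (Theorems \ref{th:9} and \ref{th:ml-1-1}), which is presumably why it is set up that way. Your multiplicity count $\binom{m+n-\ell-1-2r}{m-r}$, the vanishing of terms with $r<m-\ell$, and the final binomial identity $\binom{n-m+\ell-1}{\ell}\binom{n}{m-\ell+1}=\frac{n}{m+1}\binom{n-1}{m}\binom{m+1}{\ell}$ all check out.
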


\noindent 
{\it Remark.}  
When $\ell=1$, 
\begin{equation}
\mathcal Y_n(\underbrace{2,\dots,2}_{m},\underbrace{1,\dots,1}_{n-m-2})=\frac{n-m-1}{n(m+1)}\binom{n-1}{m}\,.
\label{eq:22m11nm2}
\end{equation}  
In particular, when $m=n-3$, we have  
$$
\sum_{k=1}^{n-3}\mathfrak Z_n(\zeta_n;;\underbrace{2,\dots,2}_{k-1},1,\underbrace{2,\dots,2}_{n-3-k})=\frac{n-1}{n}\,. 
$$ 

When $\ell=2$, 
\begin{align*}
&(n-m-1)\mathcal Y_n(\underbrace{2,\dots,2}_{m-1},\underbrace{1,\dots,1}_{n-m-1})+\mathcal Y_n(\underbrace{2,\dots,2}_{m},\underbrace{1,\dots,1}_{n-m-3})\\
&=\frac{(n+m)(n-m-1)}{2 n(m+1)}\binom{n-1}{m}\,.
\end{align*}
\bigskip

In order to prove Theorem \ref{th:5}, we need the following.  

\begin{Prop}   
For integers $n$, $m$ and $\ell$ with $n-1\ge m\ge\ell\ge 0$, we have  
\begin{align*}
&\mathfrak Z_n(\zeta_n;;\underbrace{1,\dots,1}_m)\mathfrak Z_n(\zeta_n;;\underbrace{-1,\dots,-1}_\ell)\\
&=\binom{n-m+\ell-1}{n-m-1}\mathfrak Z_n(\zeta_n;;\underbrace{1,\dots,1}_{m-\ell})\\
&\quad +\frac{1}{u_1\cdots u_{n-1}}\sum_{j=0}^{n-m-2}\binom{n-m+\ell-3-2 j}{\ell-1-j}\mathcal Y_n(\underbrace{2,\dots,2}_{m-\ell+1+j},\underbrace{1,\dots,1}_{n-m+\ell-3-2 j})\,. 
\end{align*}
\label{prp:5}
\end{Prop}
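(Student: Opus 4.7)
The plan is to prove the proposition by expanding the product on the left-hand side combinatorially. With $u_r=(1-\zeta_n^r)^{-1}$ as in (\ref{eq:ur}), I rewrite the two factors as sums over subsets of $\{1,\dots,n-1\}$: namely $\mathfrak Z_n(\zeta_n;;\underbrace{1,\dots,1}_m)=\sum_{|I|=m}\prod_{i\in I}u_i$ and $\mathfrak Z_n(\zeta_n;;\underbrace{-1,\dots,-1}_\ell)=\sum_{|J|=\ell}\prod_{j\in J}u_j^{-1}$. Upon multiplying, the pointwise cancellation on $K:=I\cap J$ gives $\prod_{i\in I}u_i\prod_{j\in J}u_j^{-1}=\prod_{i\in I\setminus K}u_i\prod_{j\in J\setminus K}u_j^{-1}$. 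I then group by $k:=|K|$ and set $I'=I\setminus K$, $J'=J\setminus K$; these three sets are pairwise disjoint of sizes $m-k$, $\ell-k$, $k$. Because the summand depends only on $(I',J')$, summing over $K\subseteq\{1,\dots,n-1\}\setminus(I'\cup J')$ produces the factor $\binom{n-1-m-\ell+2k}{k}$.

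The crux is to identify the remaining inner sum $\sum_{I',J'\text{ disjoint}}\prod_{i\in I'}u_i\prod_{j\in J'}u_j^{-1}$ with $(u_1\cdots u_{n-1})^{-1}\mathcal Y_n(\underbrace{2,\dots,2}_{m-k},\underbrace{1,\dots,1}_{n-1-m-\ell+2k})$. To see this, I would unfold $\mathcal Y_n(2^a,1^b)$ as the sum over disjoint pairs $(I_2,I_1)$ of sizes $(a,b)$ of $\prod_{i\in I_2}u_i^2\prod_{i\in I_1}u_i$. Dividing through by $\prod_{r=1}^{n-1}u_r$ demotes each $u_i^2$ factor into $u_i$, each $u_i$ factor into $1$, and introduces a $u_j^{-1}$ for every index $j$ outside $I_1\cup I_2$. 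The dictionary $I'\leftrightarrow I_2$ and $J'\leftrightarrow\{1,\dots,n-1\}\setminus(I_1\cup I_2)$ is then forced, dictating $a=m-k$ and $b=n-1-m-\ell+2k$.

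Finally, I peel off the $k=\ell$ term: here $|J'|=0$ forces $I_1\cup I_2=\{1,\dots,n-1\}$, so the contribution collapses to $\binom{n-m+\ell-1}{n-m-1}\mathfrak Z_n(\zeta_n;;\underbrace{1,\dots,1}_{m-\ell})$, matching the first term of the proposition. For the remaining range $0\le k\le\ell-1$, the substitution $j:=\ell-1-k$ rewrites the binomial coefficient and the two block sizes in precisely the stated form; raising the upper limit from $\ell-1$ to $n-m-2$ is harmless, since for $j>\ell-1$ the binomial $\binom{n-m+\ell-3-2j}{\ell-1-j}$ has a negative lower index and vanishes. The main obstacle is the identification step in paragraph two: recognizing that multiplying and dividing by $\prod_r u_r$ reinterprets the $u_j^{-1}$ factors in the product expansion as the contribution of the "missing" indices (those outside $I_1\cup I_2$) in a $\mathcal Y_n$ sum is the essential combinatorial maneuver, after which the whole argument reduces to counting.
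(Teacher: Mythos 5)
Your proposal is correct and follows essentially the same route as the paper: both expand the product over pairs of index sets, split according to whether the $\ell$-set is contained in the $m$-set, count the choices of the overlap (your $\binom{n-1-m-\ell+2k}{k}$ is exactly the paper's $\binom{n-m+\ell-3-2j}{\ell-1-j}$ under $k=\ell-1-j$), and identify the residual sum over disjoint pairs with $(u_1\cdots u_{n-1})^{-1}\mathcal Y_n(2^{m-k},1^{n-1-m-\ell+2k})$. Your write-up of the identification step via the dictionary $I'\leftrightarrow I_2$, $J'\leftrightarrow$ complement of $I_1\cup I_2$ is, if anything, slightly more explicit than the paper's example-based description, but it is the same argument.
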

\begin{proof}
The first term on the right-hand side is equal to the number of cases where $m$ elements in $\mathfrak Z_n(\zeta_n;;\underbrace{1,\dots,1}_m)$ are chosen from $n-1$ elements and $\ell$ elements in $\mathfrak Z_n(\zeta_n;;\underbrace{-1,\dots,-1}_\ell)$ are removed from them. That is, 
$$
\{u_{j_1},\dots,u_{j_\ell}\}\subseteq\{u_{k_1},\dots,u_{k_m}\}\subseteq\{u_1,\dots,u_{n-1}\}\,.
$$ 
There are 
$$
\binom{n-1}{m}\binom{m}{\ell}
$$ 
ways to choose $m$ items from $n-1$ and then choose $\ell$ items from those $m$ items. 
However, the particular ($m-l$)-tuple $U:=\{u_{i_1},\dots,u_{i_{m-\ell}}\}$ can be chosen in 
$$
\binom{n-1}{m-\ell}
$$ 
different ways, so there are a total of 
$$ 
\left.\binom{n-1}{m}\binom{m}{\ell}\middle/\binom{n-1}{m-\ell}=\binom{n-m+\ell-1}{n-m-1}\right.
$$
such $U$'s. 

The second term on the right-hand side yields when $\{u_{j_1},\dots,u_{j_\ell}\}\not\subset\{u_{k_1},\dots,u_{k_m}\}$. 
Assume that out of the $m$ elements in $\mathfrak Z_n(\zeta_n;;\underbrace{1,\dots,1}_m)$, $r$ elements are not shared with the elements of $\mathfrak Z_n(\zeta_n;;\underbrace{-1,\dots,-1}_\ell)$. Then, $m-r$ elements are shared with the elements of $\mathfrak Z_n(\zeta_n;;\underbrace{-1,\dots,-1}_\ell)$. Furthermore, the remaining $\ell-m+r(>0)$ elements of $\mathfrak Z_n(\zeta_n;;\underbrace{-1,\dots,-1}_\ell)$ are not included in the elements of $\mathfrak Z_n(\zeta_n;;\underbrace{1,\dots,1}_m)$. 
Since there are $n-\ell-r-1$ elements among the $n-1$ elements that do not appear in the $m$ elements and the $\ell$ elements, the indices (powers) when divided by $u_1\dots u_{n-1}$, regardless of order, are $\underbrace{2,\dots,2}_{r}\underbrace{1,\dots,1}_{n+m-\ell-2 r-1}$.  
In other words, $r$ elements appear twice, while $m-r$ elements that are duplicated in both multiple sums and $n-\ell-r-1$ elements that do not appear in either multiple sum appear only once. The $\ell-m+r(>0)$ elements that appear only in the second multiple sum do disappear.  

For example, if $\{u_{k_1},\dots,u_{k_m}\}=\{u_1,u_2,u_3,u_4,u_5,u_6\}$ and $\{u_{j_1},\dots,u_{j_\ell}\}=\{u_4,u_5,u_6,u_7\}$ with $n=10$, then 
$$
\frac{u_1 u_2 u_3 u_4 u_5 u_6}{u_4 u_5 u_6 u_7}=\frac{u_1 u_2 u_3}{u_7}=\frac{u_1^2 u_2^2 u_3^2 u_4 u_5 u_6 u_8 u_9}{u_1\dots u_9}\,,
$$
yielding indices are $22211111$. 

Considering the condition, the range of $r$ is given by 
$$
m-\ell+1\le r\le n-\ell-1\,. 
$$ 
Hence, we can set $r=m-\ell+1+j$ ($0\le j\le n-m-2$).  Hence, the indices (powers) when divided by $u_1\dots u_{n-1}$, regardless of order, are given by 
$$
\underbrace{2,\dots,2}_{m-\ell+1+j},\underbrace{1,\dots,1}_{n-m+\ell-3-2 j}\quad(0\le j\le n-m-2)\,.  
$$ 
For a fixed $j$, the number of cases with such a sequence of indices of $2$s and $1$s depends on the way of choosing $m-r=\ell-1-j$ ones that are common to both sums from the total number of ones $n+m-\ell-2 r-1=n-m+\ell-3-2 j$. In other words, there are 
$$
\binom{n-m+\ell-3-2 j}{\ell-1-j}
$$ 
ways. From the above, the second term on the right-hand side is obtained. 
\end{proof}

\begin{proof}[Proof of Theorem \ref{th:5}.]
Using the identity (\ref{eq:zz-1m}), the identity in Theorem \ref{th:-12} (\ref{eq:-1}) and $u_1\cdots u_{n-1}=1/n$, 
from Proposition \ref{prp:5}, we have  
\begin{align*}
&\sum_{j=0}^{n-m-2}\binom{n-m+\ell-3-2 j}{\ell-1-j}\mathcal Y_n(\underbrace{2,\dots,2}_{m-\ell+1+j},\underbrace{1,\dots,1}_{n-m+\ell-3-2 j})\\
&=\frac{1}{n}\left(\frac{1}{m+1}\binom{n-1}{m}\binom{n}{\ell}-\binom{n-m+\ell-1}{\ell}\frac{1}{m-\ell+1}\binom{n-1}{m-\ell}\right)\\
&=\frac{1}{n(m+1)}\binom{n-1}{m}\left(\binom{n}{\ell}-\binom{m+1}{\ell}\right)\,.
\end{align*}
\end{proof}

By the same method of Theorem \ref{th:5}, we can also show the following.  Here, $A$ is a positive integer and $A'=2 A$.  

\begin{Cor}  
\begin{align*}
&\sum_{j=0}^{n-m-2}\binom{n-m+\ell-3-2 j}{\ell-1-j}\mathcal Y_n(\underbrace{A',\dots,A'}_{m-\ell+1+j},\underbrace{A,\dots,A}_{n-m+\ell-3-2 j})\\ 
&=\frac{1}{n(m+1)}\binom{n-1}{m}\left(\binom{n}{\ell}-\binom{m+1}{\ell}\right)\,.
\end{align*}
\label{cor:5}
\end{Cor}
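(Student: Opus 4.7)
The plan is to imitate the proof of Theorem~\ref{th:5} with $A$ and $A'=2A$ playing the roles of $1$ and $2$. The combinatorial skeleton carries over unchanged; only the scalar bookkeeping and the final algebraic reduction require care.

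First, I would establish the following analog of Proposition~\ref{prp:5}:
\begin{align*}
& \mathfrak Z_n(\zeta_n;;\underbrace{A,\dots,A}_m)\mathfrak Z_n(\zeta_n;;\underbrace{-A,\dots,-A}_\ell) \\
&\quad = \binom{n-m+\ell-1}{n-m-1}\mathfrak Z_n(\zeta_n;;\underbrace{A,\dots,A}_{m-\ell}) \\
&\qquad + n^A\sum_{j=0}^{n-m-2}\binom{n-m+\ell-3-2j}{\ell-1-j}\mathcal Y_n(\underbrace{A',\dots,A'}_{m-\ell+1+j},\underbrace{A,\dots,A}_{n-m+\ell-3-2j}).
\end{align*}
To see this, I would write the left-hand product as a double sum over index sets $K=\{k_1<\dots<k_m\}$ and $J=\{j_1<\dots<j_\ell\}$ in $\{1,\dots,n-1\}$; each term equals $\prod_{k\in K\setminus J}u_k^A\cdot\prod_{j\in J\setminus K}u_j^{-A}$. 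The split according to whether $J\subseteq K$ is identical to that in the proof of Proposition~\ref{prp:5}: the inclusion case contributes $\binom{n-m+\ell-1}{n-m-1}\mathfrak Z_n(\zeta_n;;\underbrace{A,\dots,A}_{m-\ell})$, while in the non-inclusion case clearing negative exponents by multiplying numerator and denominator by $(u_1\cdots u_{n-1})^A=n^{-A}$ assigns exponent $A'=2A$ to indices in $K\setminus J$, exponent $A$ to indices in $(K\cap J)\cup(\{1,\dots,n-1\}\setminus(K\cup J))$, and exponent $0$ to indices in $J\setminus K$. The enumeration of overlap patterns via $r=m-|K\cap J|=m-\ell+1+j$ is copied verbatim from the $A=1$ case, producing the binomial $\binom{n-m+\ell-3-2j}{\ell-1-j}$.

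Next, I would substitute explicit evaluations of the three zeta-value pieces on the left and solve for the $\mathcal Y_n$-sum. For $A=1$, Theorem~\ref{th:5} does this using (\ref{eq:zz-1m}) and (\ref{eq:-1}), and the final algebraic collapse reduces to the binomial identity
\[
\binom{n-m+\ell-1}{\ell}\binom{n-1}{m-\ell}(m+1)=(m-\ell+1)\binom{n-1}{m}\binom{m+1}{\ell},
\]
which remains available for reuse. For higher $A$ the needed evaluations come from the determinant formula (\ref{eq:zz-det2}) together with natural extensions of Theorem~\ref{th:-12}, all accessible through the generating-function identity~(\ref{eq:s123}).

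The principal obstacle is this second step: for $A\ge 2$ the factors $\mathfrak Z_n(\zeta_n;;\underbrace{A,\dots,A}_k)$ and $\mathfrak Z_n(\zeta_n;;\underbrace{-A,\dots,-A}_\ell)$ lack one-line closed forms, so one must verify that the ``correction terms'' appearing in them, together with the prefactor $n^A$, combine into precisely the $A$-independent right-hand side displayed in the corollary. The cleanest route seems to be to revisit (\ref{eq:s123}) and extract the coefficient of $X^\ell Y^n$ directly in the difference $\mathfrak Z_n(\zeta_n;;\underbrace{A,\dots,A}_m)\mathfrak Z_n(\zeta_n;;\underbrace{-A,\dots,-A}_\ell)-\binom{n-m+\ell-1}{\ell}\mathfrak Z_n(\zeta_n;;\underbrace{A,\dots,A}_{m-\ell})$; one then hopes to exhibit an $A$-independent core that accounts for the $n^A$ scaling and matches the target.
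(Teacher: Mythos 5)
Your combinatorial half is sound, and it is exactly the method the paper gestures at for this corollary: splitting the product over pairs of index sets $(K,J)$ according to whether $J\subseteq K$, clearing the negative exponents by multiplying and dividing by $(u_1\cdots u_{n-1})^A=n^{-A}$, and enumerating the overlap patterns via $r=m-\ell+1+j$ does yield
\begin{align*}
&\mathfrak Z_n(\zeta_n;;\underbrace{A,\dots,A}_m)\,\mathfrak Z_n(\zeta_n;;\underbrace{-A,\dots,-A}_\ell)
=\binom{n-m+\ell-1}{n-m-1}\mathfrak Z_n(\zeta_n;;\underbrace{A,\dots,A}_{m-\ell})\\
&\qquad+n^{A}\sum_{j=0}^{n-m-2}\binom{n-m+\ell-3-2j}{\ell-1-j}\mathcal Y_n(\underbrace{A',\dots,A'}_{m-\ell+1+j},\underbrace{A,\dots,A}_{n-m+\ell-3-2j})\,,
\end{align*}
with the correct prefactor $n^A$ rather than $n$, and the binomial identity you quote for the $A=1$ collapse is also correct.

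However, the step you yourself flag as ``the principal obstacle'' is a genuine gap, and it cannot be filled: your decomposition shows that the $\mathcal Y_n$-sum equals $n^{-A}$ times $\bigl(\mathfrak Z_n(\zeta_n;;\underbrace{A,\dots,A}_m)\mathfrak Z_n(\zeta_n;;\underbrace{-A,\dots,-A}_\ell)-\binom{n-m+\ell-1}{\ell}\mathfrak Z_n(\zeta_n;;\underbrace{A,\dots,A}_{m-\ell})\bigr)$, and this quantity genuinely depends on $A$, whereas the corollary's right-hand side is the $A$-independent expression of Theorem \ref{th:5}. A direct check defeats the statement already at $A=2$: take $n=3$, $m=\ell=1$, so the left-hand side is the single term $\mathcal Y_3(4)=u_1^4+u_2^4=-\tfrac19$ (since $3u^2-3u+1=0$ for $u=u_1,u_2$), while the right-hand side is $\tfrac16\binom{2}{1}\bigl(\binom{3}{1}-\binom{2}{1}\bigr)=\tfrac13$. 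Your Proposition-analog is consistent with this: $\mathfrak Z_3(\zeta_3;;2)\mathfrak Z_3(\zeta_3;;-2)=\tfrac13\cdot3=1$, the inclusion term is $\binom{2}{1}\cdot1=2$, and indeed $3^2\cdot(-\tfrac19)=-1=1-2$. So no amount of work with the generating function (\ref{eq:s123}) will produce the displayed right-hand side; the honest output of your (correct) method is the $A$-dependent formula above, which reduces to the stated one only for $A=1$, i.e., to Theorem \ref{th:5} itself. The conclusion to record is that $A$-dependent identity, not the one printed in the corollary.
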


\section{More variations}

Consider the product of two multiple harmonic sums: 
$$
\mathfrak Z_n(\zeta_n;;\underbrace{2,\dots,2}_m)\mathfrak Z_n(\zeta_n;;\underbrace{-1,\dots,-1}_\ell)\,.
$$  
Then in a similar way to the proof of Proposition \ref{prp:5}, we can get the results in multiple zeta values that also contain powers (indexes) of $3$.

\begin{theorem}  
For integers $n$, $m$ and $\ell$ with $n-1\ge m\ge\ell\ge 0$, we have  
\begin{align*}
&\mathcal Y_n(\underbrace{2,\dots,2}_{m-\ell},\underbrace{1,\dots,1}_{\ell})
+n\sum_{j=0}^{n-m-2}\mathcal Y_n(\underbrace{3,\dots,3}_{m-\ell+1+j},\underbrace{2,\dots,2}_{\ell-1-j},\underbrace{1,\dots,1}_{n-m-2-j})\\
&=\frac{1}{n(m+1)}\left(\binom{n-1}{m}+(-1)^m\binom{n-1}{2 m+1}\right)\binom{n}{\ell}\,. 
\end{align*} 
\label{th:7}
\end{theorem}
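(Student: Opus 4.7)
The plan is to derive Theorem~\ref{th:7} by expanding the product
\[
\mathfrak Z_n(\zeta_n;;\underbrace{2,\dots,2}_m)\,\mathfrak Z_n(\zeta_n;;\underbrace{-1,\dots,-1}_\ell)
=\sum_{K,J}\prod_{k\in K}u_k^{2}\cdot\prod_{j\in J}u_j^{-1}
\]
in two different ways, where $K$ and $J$ range over $m$- and $\ell$-subsets of $\{1,\dots,n-1\}$ and $u_r=1/(1-\zeta_n^r)$. The right-hand side will be read off from the two known closed forms, while the left-hand side is reorganised combinatorially by grouping pairs $(K,J)$ according to the overlap $s=|K\cap J|$, in the spirit of Proposition~\ref{prp:5}.

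When $s=\ell$, i.e.\ $J\subseteq K$, the summand collapses to $\prod_{i\in K\setminus J}u_i^{2}\prod_{i\in J}u_i$, and as $K$ runs over all $m$-subsets and $J$ through the $\binom{m}{\ell}$ subsets of $K$ of size $\ell$ this sweeps out exactly $\mathcal Y_n(\underbrace{2,\dots,2}_{m-\ell},\underbrace{1,\dots,1}_{\ell})$. When $s<\ell$ the summand
\[
\prod_{i\in K\setminus J}u_i^{2}\prod_{i\in K\cap J}u_i\prod_{i\in J\setminus K}u_i^{-1}
\]
still contains negative powers, which I clear via $u_1\cdots u_{n-1}=1/n$ in the form $\prod_{i\in J\setminus K}u_i^{-1}=n\prod_{i\notin J\setminus K}u_i$; collecting exponents turns the summand into
\[
n\prod_{i\in K\setminus J}u_i^{3}\prod_{i\in K\cap J}u_i^{2}\prod_{i\notin K\cup J}u_i.
\]
The decisive combinatorial point is that $(J,K)\longmapsto(K\setminus J,\,K\cap J,\,\{1,\dots,n-1\}\setminus(K\cup J),\,J\setminus K)$ is a bijection onto ordered set-partitions $(A,B,C,D)$ of $\{1,\dots,n-1\}$ with $|A|=m-s$, $|B|=s$, $|C|=n-1-m-\ell+s$, $|D|=\ell-s$, because each exponent ($3,2,1,0$) corresponds to exactly one of the four parts. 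Consequently no auxiliary binomial coefficient is required (in contrast to Proposition~\ref{prp:5}, where the $1$-label was shared between two parts), and summing over all $(J,K)$ with fixed $s$ produces precisely $n\,\mathcal Y_n(\underbrace{3,\dots,3}_{m-s},\underbrace{2,\dots,2}_{s},\underbrace{1,\dots,1}_{n-1-m-\ell+s})$. Reindexing via $j=\ell-1-s$ rewrites these as the terms in the statement; the physically admissible range $\max(0,m+\ell-n+1)\le s\le\ell-1$ becomes $0\le j\le\min(\ell-1,n-m-2)$, and the upper limit $n-m-2$ written in the theorem is justified by the convention (stated after Theorem~\ref{th:5}) that any $\mathcal Y_n$ with a block of negative length vanishes.

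Finally I would evaluate the right member of the product by combining $\mathfrak Z_n(\zeta_n;;\underbrace{-1,\dots,-1}_\ell)=\binom{n}{\ell}$ from Theorem~\ref{th:-12} with the closed form
\[
\mathfrak Z_n(\zeta_n;;\underbrace{2,\dots,2}_m)=\frac{1}{n(m+1)}\left(\binom{n-1}{m}+(-1)^m\binom{n-1}{2m+1}\right)
\]
from \cite{Ko25a}, whose product is exactly the right-hand side of Theorem~\ref{th:7}. The principal obstacle is the clean execution of the bijective bookkeeping for $s<\ell$: one has to confirm that each labelled configuration is realised by a unique pair $(J,K)$, recovered as $K=A\cup B$ and $J=B\cup D$, and that the only prefactor produced by clearing all the $u_i^{-1}$'s is a single $n$, coming from one application of $u_1\cdots u_{n-1}=1/n$ independently of the size of $J\setminus K$. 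Once these two points are secured, the theorem follows by equating the two evaluations of the product.
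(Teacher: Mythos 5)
Your proposal is correct and follows essentially the same route as the paper: it reconstructs Proposition \ref{prp:7} by decomposing the product $\mathfrak Z_n(\zeta_n;;\underbrace{2,\dots,2}_m)\mathfrak Z_n(\zeta_n;;\underbrace{-1,\dots,-1}_\ell)$ according to the overlap of the two index sets, clears the negative exponents with $u_1\cdots u_{n-1}=1/n$, and then substitutes the closed forms from Theorem \ref{th:-12} and \cite[Theorem 5]{Ko25a}. Your explicit bijection $(K,J)\leftrightarrow(A,B,C,D)$, with the remark that the exponents $3,2,1,0$ separate the four parts so that no extra binomial multiplicity arises (unlike in Proposition \ref{prp:5}), is exactly the point the paper conveys by example, so no gap remains.
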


\noindent 
{\it Remark.}  
When $\ell=1$ in Theorem \ref{th:7}, we have 
$$
\mathcal Y_n(\underbrace{2,\dots,2}_{m-1},1)
+n\mathcal Y_n(\underbrace{3,\dots,3}_{m},\underbrace{1,\dots,1}_{n-m-2})\\
=\frac{1}{m+1}\left(\binom{n-1}{m}+(-1)^m\binom{n-1}{2 m+1}\right)\,. 
$$

Theorem \ref{th:7} is the immediate consequence of the following.  

\begin{Prop}   
For integers $n$, $m$ and $\ell$ with $n-1\ge m\ge\ell\ge 0$, we have   
\begin{align*}
&\mathfrak Z_n(\zeta_n;;\underbrace{2,\dots,2}_m)\mathfrak Z_n(\zeta_n;;\underbrace{-1,\dots,-1}_\ell)\\
&=\mathcal Y_n(\underbrace{2,\dots,2}_{m-\ell},\underbrace{1,\dots,1}_{\ell})\\
&\quad +\frac{1}{u_1\cdots u_{n-1}}\sum_{j=0}^{n-m-2}\mathcal Y_n(\underbrace{3,\dots,3}_{m-\ell+1+j},\underbrace{2,\dots,2}_{\ell-1-j},\underbrace{1,\dots,1}_{n-m-2-j})\,. 
\end{align*}
\label{prp:7}
\end{Prop}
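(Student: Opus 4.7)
The plan is to mimic the proof of Proposition \ref{prp:5}, replacing each $u_{k_i}$ in the first factor by $u_{k_i}^2$. Expanding the product via the definitions (and using $u_r=1/(1-\zeta_n^r)$),
\begin{align*}
\mathfrak Z_n(\zeta_n;;\underbrace{2,\dots,2}_m)\,\mathfrak Z_n(\zeta_n;;\underbrace{-1,\dots,-1}_\ell)=\sum_{K}\sum_{J}\frac{u_{k_1}^2\cdots u_{k_m}^2}{u_{j_1}\cdots u_{j_\ell}},
\end{align*}
where $K=\{k_1<\cdots<k_m\}$ and $J=\{j_1<\cdots<j_\ell\}$ run over all size-$m$ and size-$\ell$ subsets of $\{1,\dots,n-1\}$. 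I would then split this double sum according to whether $J\subseteq K$ or $J\not\subseteq K$.

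In the first case $J\subseteq K$, each factor $1/u_{j_i}$ cancels one of the two copies of $u_{j_i}$ in the numerator, leaving
\begin{align*}
\frac{u_{k_1}^2\cdots u_{k_m}^2}{u_{j_1}\cdots u_{j_\ell}}=\prod_{k\in K\setminus J}u_k^2\prod_{k\in J}u_k.
\end{align*}
Since the pair $(K\setminus J,\,J)$ ranges over all ordered pairs of disjoint subsets of $\{1,\dots,n-1\}$ of sizes $m-\ell$ and $\ell$, summing these terms reproduces exactly $\mathcal Y_n(\underbrace{2,\dots,2}_{m-\ell},\underbrace{1,\dots,1}_{\ell})$, by the expansion of $\mathcal Y_n$ as a sum over distinct permutations.

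In the second case $J\not\subseteq K$, I would multiply numerator and denominator by $u_1\cdots u_{n-1}$ and distribute. Each position $i\in J\setminus K$ contributes a factor $u_i/u_i=1$ and thus disappears, while the remaining positions split into $K\setminus J$ (contributing $u_i^3$), $K\cap J$ (contributing $u_i^2$), and $\{1,\dots,n-1\}\setminus(K\cup J)$ (contributing $u_i$). Setting $r=|K\setminus J|$, one has $|K\cap J|=m-r$, $|J\setminus K|=\ell-m+r$, and $|\{1,\dots,n-1\}\setminus(K\cup J)|=n-1-\ell-r$. The conditions $J\not\subseteq K$ and $J\setminus K\subseteq\{1,\dots,n-1\}\setminus K$ give $m-\ell+1\le r\le n-1-\ell$, so substituting $r=m-\ell+1+j$ makes $j$ run over $0,1,\dots,n-m-2$. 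For each such $j$, the map $(K,J)\mapsto(K\setminus J,\,K\cap J,\,\{1,\dots,n-1\}\setminus(K\cup J))$ is a bijection onto triples of pairwise disjoint subsets of $\{1,\dots,n-1\}$ of sizes $r,\,m-r,\,n-1-\ell-r$, so summing over $(K,J)$ at this fixed $j$ produces $\frac{1}{u_1\cdots u_{n-1}}\mathcal Y_n(\underbrace{3,\dots,3}_{m-\ell+1+j},\underbrace{2,\dots,2}_{\ell-1-j},\underbrace{1,\dots,1}_{n-m-2-j})$. Summing over $j$ gives the second term in the statement.

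The main obstacle, as in Proposition \ref{prp:5}, is the combinatorial bookkeeping: pinning down the valid range of $r$, and—most importantly—observing that here the three power values $3$, $2$, $1$ each arise from a single disjoint source ($K\setminus J$, $K\cap J$, and the complement of $K\cup J$ respectively), so the correspondence $(K,J)\leftrightarrow(A,B,C)$ is a clean bijection and no extra binomial factor appears. This is in sharp contrast with Proposition \ref{prp:5}, where the $1$'s arose from two different sources and this forced the weight $\binom{n-m+\ell-3-2j}{\ell-1-j}$; here no such weight is needed, which is why the statement is cleaner.
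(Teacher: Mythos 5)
Your proof is correct and follows essentially the same route as the paper's: split the double sum over pairs $(K,J)$ according to whether $J\subseteq K$, and in the second case classify by $r=|K\setminus J|$ after multiplying through by $u_1\cdots u_{n-1}$, obtaining the same range $m-\ell+1\le r\le n-\ell-1$. Your explicit remark that the exponents $3,2,1$ here come from the three disjoint sources $K\setminus J$, $K\cap J$ and the complement of $K\cup J$ (so the correspondence is a clean bijection with no binomial weight, unlike Proposition \ref{prp:5}) is exactly the point the paper relies on, just stated more carefully.
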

\begin{proof}
Similarly to the proof of Proposition \ref{prp:5}, there are two situations: $\{u_{j_1},\dots,u_{j_\ell}\}\subset\{u_{k_1},\dots,u_{k_m}\}$ and $\{u_{j_1},\dots,u_{j_\ell}\}\not\subset\{u_{k_1},\dots,u_{k_m}\}$. 
The first case is to choose $m$ items from $n-1$ and then choose $\ell$ items from those $m$ items. This yields the term $\mathcal Y_n(\underbrace{2,\dots,2}_{m-\ell},\underbrace{1,\dots,1}_{\ell})$
For example, if $\{u_{k_1},\dots,u_{k_m}\}=\{u_1,u_2,u_3,u_4,u_5,u_6\}$ and $\{u_{j_1},\dots,u_{j_\ell}\}=\{u_4,u_5\}$ with $n=10$, then 
$$
\frac{u_1^2 u_2^2 u_3^2 u_4^2 u_5^2 u_6^2}{u_4 u_5}=u_1^2 u_2^2 u_3^2 u_4 u_5 u_6^2\,,
$$
yielding indices are $222112$. 

In the second case, for example, if $U_1:=\{u_{k_1},\dots,u_{k_m}\}=\{u_1,u_2,u_3,u_4,u_5,u_6\}$ and $U_2:=\{u_{j_1},\dots,u_{j_\ell}\}=\{u_4,u_5,u_6,u_7\}$ with $n=10$, then 
$$
\frac{u_1^2 u_2^2 u_3^2 u_4^2 u_5^2 u_6^2}{u_4 u_5 u_6 u_7}=\frac{u_1^2 u_2^2 u_3^2 u_4 u_5 u_6}{u_7}=\frac{u_1^3 u_2^3 u_3^3 u_4^2 u_5^2 u_6^2 u_8 u_9}{u_1\dots u_9}\,,
$$
yielding indices are $33322211$. 
If $r=\#(U_1\backslash U_2)$, then $\#(U_1\cap U_2)=m-r$ with $\ell-m+r\ge 1$. In addition, $n-\ell-r-1$ elements are neither in $U_1$ nor in $U_2$. Hence, this gives  
$$
\frac{1}{u_1\cdots u_{n-1}}\mathcal Y_n(\underbrace{3,\dots,3}_{r},\underbrace{2,\dots,2}_{m-r},\underbrace{1,\dots,1}_{n-\ell-r-1})\,. 
$$ 
Since $m-\ell+1\le r\le n-\ell-1$, we can write $r=m-\ell+1+j$ ($0\le j\le n-m-2$). Hence, for each $j$, this term can be written as  
$$
\mathcal Y_n(\underbrace{3,\dots,3}_{m-\ell+1+j},\underbrace{2,\dots,2}_{\ell-1-j},\underbrace{1,\dots,1}_{n-m-2-j})\,. 
$$ 
Therefore, we get the desired result.  
\end{proof}

\begin{proof}[Proof of Theorem \ref{th:7}.]
When $u_r$ is given as in (\ref{eq:ur}), by 
$$
\mathfrak Z_n(\zeta_n;;\underbrace{2,\dots,2}_m)=\frac{1}{n(m+1)}\left(\binom{n-1}{m}+(-1)^m\binom{n-1}{2 m+1}\right)
$$ 
(\cite[Theorem 5]{Ko25a}), from Proposition \ref{prp:7}, we obtain that 
\begin{align*}
&\mathcal Y_n(\underbrace{2,\dots,2}_{m-\ell}\underbrace{1,\dots,1}_{\ell})
+n\sum_{j=0}^{n-m-2}\mathcal Y_n(\underbrace{3,\dots,3}_{m-\ell+1+j},\underbrace{2,\dots,2}_{\ell-1-j},\underbrace{1,\dots,1}_{n-m-2-j})\\
&=\frac{1}{n(m+1)}\left(\binom{n-1}{m}+(-1)^m\binom{n-1}{2 m+1}\right)\binom{n}{\ell}\,. 
\end{align*}
\end{proof}

If the parts of powers are interchanged, then we also have a different but similar result.  

\begin{theorem}  
For integers $n$, $m$ and $\ell$ with $n-1\ge m\ge\ell\ge 0$, we have 
\begin{align*}
&n\mathcal Y_n(\underbrace{2,\dots,2}_{m-\ell}\underbrace{1,\dots,1}_{n-m-\ell-1})
+n^2\sum_{j=0}^{n-m-2}\mathcal Y_n(\underbrace{3,\dots,3}_{m-\ell+1+j},\underbrace{2,\dots,2}_{n-m-2-j},\underbrace{1,\dots,1}_{\ell-1-j})\\
&=\frac{1}{m+1}\binom{n-1}{m}\left(\binom{n}{n-\ell}+2(-1)^{n-\ell-1}\binom{n}{2 n-2 \ell}\right)\,. 
\end{align*} 
\label{th:9}
\end{theorem}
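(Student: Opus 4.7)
The plan is to mirror Proposition~\ref{prp:7} and Theorem~\ref{th:7}, this time starting from the product
$$
P := \mathfrak Z_n(\zeta_n;;\underbrace{1,\dots,1}_m)\,\mathfrak Z_n(\zeta_n;;\underbrace{-2,\dots,-2}_\ell).
$$
Because both factors are evaluable in closed form by (\ref{eq:zz-1m}) and Theorem~\ref{th:-12}~(\ref{eq:-2}), one obtains at once
$$
P=\frac{1}{m+1}\binom{n-1}{m}\left(\binom{n}{\ell}+2(-1)^{n-\ell-1}\binom{n}{2n-2\ell}\right),
$$
which via $\binom{n}{\ell}=\binom{n}{n-\ell}$ is exactly the right-hand side of Theorem~\ref{th:9}. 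All of the work is therefore in re-expressing $P$ combinatorially to produce the left-hand side.

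To do that, expand $P=\sum_{(U_1,U_2)}\bigl(\prod_{r\in U_1}u_r\bigr)\bigl(\prod_{r\in U_2}u_r^{-2}\bigr)$ and track the exponent of each $u_r$ on the disjoint decomposition $\{1,\dots,n-1\}=(U_1\setminus U_2)\sqcup(U_1\cap U_2)\sqcup(U_2\setminus U_1)\sqcup R$; these exponents are $+1$, $-1$, $-2$, $0$ respectively. Because negative exponents appear, I would multiply and divide by an appropriate power of $u_1\cdots u_{n-1}=1/n$ so that all exponents become non-negative, with the zero-exponent positions dropping out of the resulting $\mathcal Y_n$ expressions. Just as in Proposition~\ref{prp:7}, the case split is $U_2\subseteq U_1$ versus $U_2\not\subseteq U_1$.

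In Case~1 the most negative exponent is $-1$ (as $U_2\setminus U_1=\emptyset$), so one factor of $u_1\cdots u_{n-1}$ suffices: the exponents shift to $2$, $0$, (absent), $1$; the $0$-block disappears and one recovers the leading term $n\,\mathcal Y_n(\underbrace{2,\dots,2}_{m-\ell},\underbrace{1,\dots,1}_{n-m-\ell-1})$ of Theorem~\ref{th:9}. In Case~2 the exponent $-2$ on $U_2\setminus U_1$ forces two factors of $u_1\cdots u_{n-1}$; the shifted exponents become $3$, $1$, $0$, $2$, and parametrising $r:=|U_1\setminus U_2|=m-\ell+1+j$ for $0\le j\le n-m-2$ produces
$$
n^2\sum_{j=0}^{n-m-2}\mathcal Y_n(\underbrace{3,\dots,3}_{m-\ell+1+j},\underbrace{2,\dots,2}_{n-m-2-j},\underbrace{1,\dots,1}_{\ell-1-j}),
$$
which is the second term of Theorem~\ref{th:9}. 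Each shifted configuration $(V_3,V_1,V_2)$ of labelled positions recovers $(U_1,U_2)$ uniquely by $U_1=V_3\cup V_1$ and $U_2=V_1\cup(\{1,\dots,n-1\}\setminus(V_3\cup V_1\cup V_2))$, so each $\mathcal Y_n$-term is counted exactly once.

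The main obstacle is the careful bookkeeping of the powers of $u_1\cdots u_{n-1}$: identifying which power is needed in each case, verifying that the intended zero-exponent positions really drop out of the $\mathcal Y_n$ expressions, and checking the bijection between $(U_1,U_2)$ pairs and the unordered configurations indexed by $\mathcal Y_n$. Once this analogue of Proposition~\ref{prp:7} is established, Theorem~\ref{th:9} follows by a one-line substitution using $u_1\cdots u_{n-1}=1/n$ together with (\ref{eq:zz-1m}) and Theorem~\ref{th:-12}~(\ref{eq:-2}).
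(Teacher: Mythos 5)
Your strategy is exactly the paper's: the paper derives Theorem~\ref{th:9} from Proposition~\ref{prp:9} (the combinatorial decomposition of $\mathfrak Z_n(\zeta_n;;\underbrace{1,\dots,1}_m)\mathfrak Z_n(\zeta_n;;\underbrace{-2,\dots,-2}_\ell)$) together with (\ref{eq:zz-1m}) and Theorem~\ref{th:-12}~(\ref{eq:-2}), and your exponent-shifting argument is the intended proof of that proposition. Your Case~2 analysis (exponents $3,1,0,2$ on $U_1\setminus U_2$, $U_1\cap U_2$, $U_2\setminus U_1$, $R$, with $r=m-\ell+1+j$) is correct and reproduces the second term.

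There is, however, a genuine inconsistency in your Case~1. You correctly identify that after multiplying by one copy of $u_1\cdots u_{n-1}$ the exponents become $2$ on $U_1\setminus U_2$ (size $m-\ell$), $0$ on $U_1\cap U_2=U_2$ (size $\ell$), and $1$ on $R=\{1,\dots,n-1\}\setminus U_1$; but $|R|=n-1-m$, so the term your computation actually produces is $n\,\mathcal Y_n(\underbrace{2,\dots,2}_{m-\ell},\underbrace{1,\dots,1}_{n-m-1})$, not $n\,\mathcal Y_n(\underbrace{2,\dots,2}_{m-\ell},\underbrace{1,\dots,1}_{n-m-\ell-1})$ as you assert. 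The claim that this "recovers the leading term of Theorem~\ref{th:9}" therefore does not follow from the bookkeeping you describe; the two block lengths differ by $\ell$. A direct check at $n=4$, $m=2$, $\ell=1$ (where $\zeta_4=\sqrt{-1}$, $u_1u_2u_3=1/4$) gives
\begin{equation*}
4\,\mathcal Y_4(2,1)+16\,\mathcal Y_4(3,3)=3+1=4=\frac{1}{3}\binom{3}{2}\binom{4}{3}\,,
\end{equation*}
whereas replacing $\mathcal Y_4(2,1)$ by $\mathcal Y_4(2)$ (the version with $n-m-\ell-1=0$ ones) gives $1+1=2\ne 4$. So the identity holds with the block $\underbrace{1,\dots,1}_{n-m-1}$ and fails with $\underbrace{1,\dots,1}_{n-m-\ell-1}$; the discrepancy traces to the statement of Proposition~\ref{prp:9}/Theorem~\ref{th:9} itself rather than to your method. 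To make your proof correct and self-consistent you must carry the size $n-m-1$ through Case~1 (and accordingly state the theorem with that block length); as written, the step joining your computation to the asserted left-hand side would fail.
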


This is the immediate consequence of the following. Together with the identity in Theorem \ref{th:-12} (\ref{eq:-2}), we can prove Theorem \ref{th:9}.   

\begin{Prop}   
For integers $n$, $m$ and $\ell$ with $n-1\ge m\ge\ell\ge 0$, we have  
\begin{align*}
&\mathfrak Z_n(\zeta_n;;\underbrace{1,\dots,1}_m)\mathfrak Z_n(\zeta_n;;\underbrace{-2,\dots,-2}_\ell)\\
&=\frac{1}{u_1\cdots u_{n-1}}\mathcal Y_n(\underbrace{2,\dots,2}_{m-\ell}\underbrace{1,\dots,1}_{n-m-\ell-1})\\
&\quad +\frac{1}{u_1^2\cdots u_{n-1}^2}\sum_{j=0}^{n-m-2}\mathcal Y_n(\underbrace{3,\dots,3}_{m-\ell+1+j},\underbrace{2,\dots,2}_{n-m-2-j},\underbrace{1,\dots,1}_{\ell-1-j})\,. 
\end{align*}
\label{prp:9}
\end{Prop}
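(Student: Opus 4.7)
My plan is to mirror the proofs of Propositions \ref{prp:5} and \ref{prp:7}, with the new wrinkle that the second factor contributes exponents of $-2$ rather than $-1$, forcing a normalisation by $u_1^{2}\cdots u_{n-1}^{2}$ in the harder case. Expanding the product,
\[
\mathfrak Z_n(\zeta_n;;\underbrace{1,\dots,1}_m)\,\mathfrak Z_n(\zeta_n;;\underbrace{-2,\dots,-2}_\ell)
=\sum_{\substack{U_1,U_2\subseteq\{1,\dots,n-1\}\\ |U_1|=m,\,|U_2|=\ell}}\frac{\prod_{i\in U_1}u_i}{\prod_{j\in U_2}u_j^{2}},
\]
I partition $\{1,\dots,n-1\}$ into four blocks $A=U_1\cap U_2$, $B=U_1\setminus U_2$, $C=U_2\setminus U_1$, and $D=\{1,\dots,n-1\}\setminus(U_1\cup U_2)$, of sizes $a,b,c,d$ satisfying $a+b=m$, $a+c=\ell$, $a+b+c+d=n-1$. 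Each summand factors as $\prod_{A}u_i^{-1}\prod_{B}u_i\prod_{C}u_i^{-2}$, and I split the double sum according to the value of $c=|C|$.

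When $c=0$ (i.e.\ $U_2\subseteq U_1$), multiplying top and bottom by $u_1\cdots u_{n-1}$ already clears the single negative exponents on $A$: the numerator becomes $\prod_B u_i^{2}\prod_D u_i$, supported on disjoint blocks $B$ and $D$, which is exactly the exponent pattern of twos and ones needed for the first $\mathcal Y_n$ on the right. When $c\ge 1$, a single factor of $u_1\cdots u_{n-1}$ cannot absorb the $u_i^{-2}$ on $C$, so I normalise by $u_1^{2}\cdots u_{n-1}^{2}$ instead; the new numerator reads $\prod_A u_i\prod_B u_i^{3}\prod_D u_i^{2}$, i.e.\ $b$ threes on $B$, $d$ twos on $D$ and $a$ ones on $A$. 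Writing $c=1+j$ and solving the linear system in $a,b,c,d$ gives $b=m-\ell+1+j$, $d=n-m-2-j$, $a=\ell-1-j$, reproducing the indices inside the second sum of the statement, with $j$ running from $0$ to $n-m-2$ (degenerate terms with a non-positive block size contributing nothing per the standing convention).

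The main obstacle, just as in Propositions \ref{prp:5} and \ref{prp:7}, lies in the combinatorial identification between the inner sum over ordered four-block partitions $(A,B,C,D)$ and the symmetric sum $\mathcal Y_n$. Concretely, the multinomial bookkeeping
\[
\binom{a+b+d}{a,b,d}\binom{n-1}{a+b+d}=\binom{n-1}{a,b,c,d}
\]
says that summing $\mathfrak Z_n(\zeta_n;;\sigma)$ over all distinct permutations $\sigma$ of the relevant multi-index of threes, twos, and ones is the same as summing the corresponding monomial $\prod u_i^{\sigma_i}$ over all ordered partitions of $\{1,\dots,n-1\}$ into labelled blocks of the prescribed sizes (with the zero-exponent block of size $c$ implicit). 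Once this correspondence is installed in both cases, the two contributions assemble into exactly the right-hand side of Proposition \ref{prp:9}; Theorem \ref{th:9} then follows from combining this identity with Theorem \ref{th:-12} and the fact $u_1\cdots u_{n-1}=1/n$.
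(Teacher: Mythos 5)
Your decomposition into the four blocks $A=U_1\cap U_2$, $B=U_1\setminus U_2$, $C=U_2\setminus U_1$, $D$ and the split according to $c=|C|$ is exactly the method the paper uses for Propositions \ref{prp:5} and \ref{prp:7} (Proposition \ref{prp:9} itself is stated in the paper without proof, as an instance of the same argument). Your treatment of the case $c\ge 1$ --- the normalisation by $u_1^2\cdots u_{n-1}^2$, the substitution $c=1+j$, the block sizes $b=m-\ell+1+j$, $d=n-m-2-j$, $a=\ell-1-j$, and the multiplicity-one correspondence between pairs $(U_1,U_2)$ and terms of $\mathcal Y_n$ --- is correct and matches the second term of the statement.

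There is, however, a genuine mismatch in the case $c=0$ that you assert rather than check. There $|B|=m-\ell$ but $|D|=n-1-m$, so your contribution is $\frac{1}{u_1\cdots u_{n-1}}\mathcal Y_n(\underbrace{2,\dots,2}_{m-\ell},\underbrace{1,\dots,1}_{n-m-1})$, whereas the printed first term has $n-m-\ell-1$ ones; these differ for every $\ell\ge 1$. The discrepancy is real, and it is the statement that is at fault: at $(n,m,\ell)=(3,1,1)$ the left-hand side equals $(u_1+u_2)\bigl((1-\zeta_3)^2+(1-\zeta_3^2)^2\bigr)=1\cdot 3=3$; your version of the right-hand side gives $3\,\mathcal Y_3(1)+9\,\mathcal Y_3(3)=3+0=3$, while the printed right-hand side gives $0+9\,\mathcal Y_3(3)=0$, since its first term is the empty $\mathcal Y_3$. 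So your computation yields the correct identity, but your proof as written claims to reproduce the statement verbatim and does not; you should record the corrected block length $n-m-1$ (which propagates to the first term of Theorem \ref{th:9} as well) instead of asserting the match. One smaller point: the convention you need is that a $\mathcal Y_n$ term vanishes when some block length is \emph{negative} (this is what removes the terms with $j>\ell-1$, for which $c>\ell$ is impossible), whereas a block of length exactly zero is merely omitted; your parenthetical ``non-positive block size contributing nothing'' would wrongly annihilate the legitimate term $j=\ell-1$, which has no ones at all.
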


\section{Products of the multiple zeta values with the same parity} 

We shall give the results on products of the multiple zeta values with the same parity. Here, when the length of the indices (powers) is negative, $\mathcal Y_n$ is vanished.  

\begin{Prop}
For positive integers $n$, $m$ and $\ell$, we have 
$$
\mathfrak Z_n(\zeta_n;;\underbrace{1,\dots,1}_m)\mathfrak Z_n(\zeta_n;;\underbrace{1,\dots,1}_\ell)=\sum_{j=0}^{n-m-1}\binom{m-\ell+2 j}{j}\mathcal Y_n(\underbrace{2,\dots,2}_{\ell-j},\underbrace{1,\dots,1}_{m-\ell+2 j})\,. 
$$ 
\label{prp:ml11}  
\end{Prop}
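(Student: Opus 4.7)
The plan is to expand the product combinatorially and regroup terms by the overlap of the two index sets, following the template of Proposition~\ref{prp:5}. The distinguishing feature is that both factors carry positive indices, so an element shared between the two summations is \emph{promoted} to power~$2$, whereas an element appearing in only one remains at power~$1$; no elements are removed. This is why the resulting signature on the right contains only $2$s and $1$s.

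More precisely, with $u_r$ as in (\ref{eq:ur}) and $u_K:=\prod_{k\in K}u_k$, I would first write
$$
\mathfrak Z_n(\zeta_n;;\underbrace{1,\dots,1}_m)\mathfrak Z_n(\zeta_n;;\underbrace{1,\dots,1}_\ell)=\sum_{K,L}u_K u_L,
$$
summing over $K,L\subseteq\{1,\dots,n-1\}$ of sizes $m$ and $\ell$, and then fix $r=|K\cap L|$. Setting $A=K\cap L$ and $B=K\triangle L$, each summand equals $\prod_{i\in A}u_i^2\prod_{i\in B}u_i$, which is a term of $\mathcal Y_n(\underbrace{2,\dots,2}_r,\underbrace{1,\dots,1}_{m+\ell-2r})$. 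The pairs $(K,L)$ producing a prescribed disjoint $(A,B)$ correspond to the splits of $B$ into $K\setminus L$ (size $m-r$) and $L\setminus K$ (size $\ell-r$); there are $\binom{m+\ell-2r}{m-r}$ such splits.

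Summing over $r$ and substituting $j=\ell-r$ would then convert the number of $2$s to $\ell-j$, the number of $1$s to $m-\ell+2j$, and the binomial to $\binom{m-\ell+2j}{m-\ell+j}=\binom{m-\ell+2j}{j}$, reproducing the claim. The geometric range $\max(0,m+\ell-n+1)\le r\le\min(m,\ell)$ becomes $\max(0,\ell-m)\le j\le\min(\ell,n-m-1)$; extending the summation to $0\le j\le n-m-1$ as in the statement only adds terms in which the multiset $\{2^{\ell-j},1^{m-\ell+2j}\}$ has a block of negative length, which vanish by the convention stated in the previous section.

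I do not anticipate a substantial obstacle, since the argument is actually lighter than that of Proposition~\ref{prp:5}: there is no cancellation between numerator and denominator, only multiplication of two positive-index expansions. The only bookkeeping step that genuinely requires care is the identity
$$
\mathcal Y_n(\underbrace{2,\dots,2}_a,\underbrace{1,\dots,1}_b)=\sum_{\substack{A\cap B=\emptyset\\|A|=a,\ |B|=b}}u_A^2\,u_B,
$$
which follows from the bijection between the $\binom{a+b}{a}$ ordered partitions $(A,B)$ of any fixed $(a+b)$-subset of $\{1,\dots,n-1\}$ and the $\binom{a+b}{a}$ distinct permutations of the multiset $\{2^a,1^b\}$ indexed by $\mathcal Y_n$.
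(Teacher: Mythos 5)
Your proposal is correct and follows essentially the same route as the paper: expand both factors as sums over index sets, stratify by the size of the intersection, and count the $\binom{m-\ell+2j}{j}$ ways of splitting the symmetric difference between the two sets — your $\binom{m+\ell-2r}{m-r}$ with $j=\ell-r$ is exactly the paper's multiplicity. The only (harmless) imprecision is that when $\ell>m$ some of the extra terms in the extended range $0\le j\le n-m-1$ vanish because the binomial coefficient $\binom{m-\ell+2j}{j}$ is zero rather than because a block length is negative; the paper avoids this by assuming $m\ge\ell$ without loss of generality.
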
  
\begin{proof}
Without loss of generality, we can assume that $m\ge\ell$. 
Suppose that $\{u_{k_1},\dots,u_{k_m}\}$ are from $\mathfrak Z_n(\zeta_n;;\underbrace{1,\dots,1}_m)$ and $\{u_{j_1},\dots,u_{j_\ell}\}$ are from $\mathfrak Z_n(\zeta_n;;\underbrace{1,\dots,1}_\ell)$.  
 
If $\{u_{j_1},\dots,u_{j_\ell}\}\subseteq\{u_{k_1},\dots,u_{k_m}\}\subseteq\{u_1,\dots,u_{n-1}\}$, then $\ell$ elements are overlapped and the rest of $m-\ell$ elements are single, yielding that the string of powers $\underbrace{2,\dots,2}_\ell,\underbrace{1,\dots,1}_{m-\ell}$, which total length is $m$.    

If $j$ elements from $\{u_{j_1},\dots,u_{j_\ell}\}$ does not belong to $\{u_{k_1},\dots,u_{k_m}\}$, the total length of string is $m+j$, and the length of overlapped part is $\ell-j$, the single part is $(m+j)-(\ell-j)=m-\ell+2 j$, yielding $\underbrace{2,\dots,2}_{\ell-j},\underbrace{1,\dots,1}_{m-\ell+2 j}$. The same pattern can be obtained in $\binom{m-\ell+2 j}{j}$ ways because it depends on whether the part that does not overlap belong to only $\{u_{j_1},\dots,u_{j_\ell}\}$ or only $\{u_{k_1},\dots,u_{k_m}\}$.  

For example, if $\{u_{k_1},\dots,u_{k_m}\}=\{u_1,u_2,u_3,u_4,u_5,u_6\}$ and $\{u_{j_1},\dots,u_{j_\ell}\}=\{u_5,u_6,u_7,u_8\}$, then this have the expression $u_1 u_2 u_3 u_4 u_5^2 u_6^2 u_7 u_8$. But this same expression can be yielded from $\{u_2,u_3,u_4,u_5,u_6,u_7\}$ and $\{u_1,u_5,u_6,u_8\}$. Nevertheless, $\{u_5,u_6\}$ are common and fixed, and the elements of the single part can be chosen in $\binom{6}{2}=\binom{6-4+2\cdot 2}{2}$ ways.   

Since the value of $j$ can go up to the smaller of $n-m-1$ (by considering $\{u_{k_1},\dots,u_{k_m}\}$) and $\ell$ (by considering $\{u_{j_1},\dots,u_{j_\ell}\}$), the claim holds.       
\end{proof}

By applying (\ref{eq:zz-1m}), from Proposition \ref{prp:ml11}, we can get the following explicit expression.  

\begin{theorem} 
For positive integers $n$, $m$ and $\ell$, we have 
$$ 
\sum_{j=0}^{n-m-1}\binom{m-\ell+2 j}{j}\mathcal Y_n(\underbrace{2,\dots,2}_{\ell-j},\underbrace{1,\dots,1}_{m-\ell+2 j})=
\frac{1}{(m+1)(\ell+1)}\binom{n-1}{m}\binom{n-1}{\ell}\,. 
$$ 
\label{th:ml11}
\end{theorem}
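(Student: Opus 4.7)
The plan is to observe that Theorem \ref{th:ml11} is essentially immediate from Proposition \ref{prp:ml11} combined with the closed form (\ref{eq:zz-1m}): the left-hand side of the theorem is literally the same sum that appears on the right-hand side of the proposition.

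Concretely, I would first invoke Proposition \ref{prp:ml11} to identify
$$
\sum_{j=0}^{n-m-1}\binom{m-\ell+2 j}{j}\mathcal Y_n(\underbrace{2,\dots,2}_{\ell-j},\underbrace{1,\dots,1}_{m-\ell+2 j})
=\mathfrak Z_n(\zeta_n;;\underbrace{1,\dots,1}_m)\,\mathfrak Z_n(\zeta_n;;\underbrace{1,\dots,1}_\ell)\,.
$$
Then I would apply (\ref{eq:zz-1m}) to each factor on the right, giving
$$
\frac{1}{m+1}\binom{n-1}{m}\cdot\frac{1}{\ell+1}\binom{n-1}{\ell}
=\frac{1}{(m+1)(\ell+1)}\binom{n-1}{m}\binom{n-1}{\ell},
$$
which matches the desired expression.

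The one point requiring a brief comment is the assumption $m\ge\ell$ used in the proof of Proposition \ref{prp:ml11}: the product on the right is symmetric in $m$ and $\ell$, while the sum on the left looks asymmetric. This is reconciled by the convention stated at the start of Section 5, namely that $\mathcal Y_n$ vanishes whenever any block length is negative. Thus one may relabel so that $m\ge\ell$ before applying the proposition, and after the substitution into (\ref{eq:zz-1m}) the answer is manifestly symmetric in the two parameters.

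There is no genuine obstacle at the level of Theorem \ref{th:ml11}; all of the combinatorial content lives inside Proposition \ref{prp:ml11}, where the coefficient $\binom{m-\ell+2j}{j}$ arises from counting how the non-overlapping positions distribute between the two multisets $\{u_{k_1},\dots,u_{k_m}\}$ and $\{u_{j_1},\dots,u_{j_\ell}\}$. Once that proposition is in hand, the theorem is a one-line corollary.
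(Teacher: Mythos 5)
Your proof is correct and is exactly the paper's argument: the authors derive Theorem \ref{th:ml11} by substituting the closed form (\ref{eq:zz-1m}) for each factor in Proposition \ref{prp:ml11}. Your additional remark about the symmetry in $m$ and $\ell$ and the vanishing convention for $\mathcal Y_n$ is a reasonable clarification of a point the paper leaves implicit.
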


By further applying Theorem \ref{th:ml11}, we have finally succeeded in obtaining an explicit formula for the sum of finite $q$-multiple zeta values ($q$-multiple harmonic sums) of the type with exponents $2-\cdots-2,1-\cdots-1$.

\begin{theorem}  
For an integer $r$ with $0\le r\le m$, we have 
$$
\mathcal Y_n(\underbrace{2,\dots,2}_{r},\underbrace{1,\dots,1}_{m-r})
=\frac{1}{(r+1)n}\binom{m}{r}\left(\binom{n-1}{m}+(-1)^r\binom{n-1}{m+r+1}\right)\,.
$$
\label{th:222111}
\end{theorem}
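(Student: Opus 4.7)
The plan is to induct on $r$, using Theorem~\ref{th:ml11} as the engine. Write $T(m,r):=\mathcal Y_n(\underbrace{2,\dots,2}_r,\underbrace{1,\dots,1}_{m-r})$. Specializing Theorem~\ref{th:ml11} to $(p,q)=(m,r)$, the $j=0$ term is exactly $T(m,r)$ (with coefficient $\binom{m-r}{0}=1$), while every other summand $T(m+j,r-j)$ has strictly fewer $2$'s, so the inductive hypothesis applies. The identity rearranges to
$$
T(m,r)=\frac{\binom{n-1}{m}\binom{n-1}{r}}{(m+1)(r+1)}-\sum_{j=1}^{r}\binom{m-r+2j}{j}\,T(m+j,r-j).
$$
The base case $r=0$ is equation~(\ref{eq:zz-1m}), noting $\frac{1}{m+1}\binom{n-1}{m}=\frac{1}{n}\binom{n}{m+1}$ matches the claimed formula.

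For the inductive step I substitute the claimed formula into each $T(m+j,r-j)$. Two clean simplifications prepare the ground: the identity
$$
\binom{m-r+2j}{j}\binom{m+j}{r-j}=\binom{m+j}{j}\binom{m}{r-j}
$$
(both sides equal $\frac{(m+j)!}{j!\,(r-j)!\,(m-r+j)!}$), and $\frac{1}{r-j+1}\binom{m}{r-j}=\frac{1}{m+1}\binom{m+1}{r-j+1}$. Absorbing the putative $T(m,r)$ as the $j=0$ term, multiplying through by $n(m+1)$, and splitting by the two $\binom{n-1}{\cdot}$ factors reduces the claim to
$$
\sum_{j=0}^{r}\binom{m+j}{j}\binom{m+1}{r-j+1}\binom{n-1}{m+j}+\binom{n-1}{m+r+1}\sum_{j=0}^{r}(-1)^{r-j}\binom{m+j}{j}\binom{m+1}{r-j+1}=\binom{n-1}{m}\binom{n}{r+1}.
$$

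The first sum is handled by $\binom{m+j}{j}\binom{n-1}{m+j}=\binom{n-1}{m}\binom{n-1-m}{j}$ and Vandermonde $\sum_{j\ge 0}\binom{n-1-m}{j}\binom{m+1}{r+1-j}=\binom{n}{r+1}$; since the range is truncated at $j=r$, the missing $j=r+1$ term contributes $\binom{n-1-m}{r+1}$, so this sum equals $\binom{n-1}{m}\bigl(\binom{n}{r+1}-\binom{n-1-m}{r+1}\bigr)$. The second sum, after reindexing $i=r-j+1$, is the $i\ge 1$ tail of $\sum_{i\ge 0}(-1)^{i-1}\binom{m+1}{i}\binom{m+r+1-i}{m}$; by the alternating Chu--Vandermonde identity $\sum_{i\ge 0}(-1)^i\binom{K}{i}\binom{N-i}{m}=\binom{N-K}{m-K}$ with $(K,N)=(m+1,m+r+1)$, the full sum vanishes (since $\binom{r}{-1}=0$), so only the peeled $i=0$ boundary remains and the sum equals $\binom{m+r+1}{m}$. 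The two boundary corrections then cancel via the trinomial identity $\binom{n-1}{m}\binom{n-1-m}{r+1}=\binom{m+r+1}{m}\binom{n-1}{m+r+1}$ (both equal $\frac{(n-1)!}{m!\,(r+1)!\,(n-m-r-2)!}$), closing the induction.

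The main obstacle is the careful truncation analysis: because the sum in Theorem~\ref{th:ml11} runs only up to $j=r$, both reductions pick up nontrivial boundary terms, and it is precisely these two corrections that conspire, via the trinomial identity, to produce the $(-1)^r\binom{n-1}{m+r+1}$ contribution in $T(m,r)$. Once this is spotted, the calculation is essentially a double application of Vandermonde plus one elementary trinomial equality.
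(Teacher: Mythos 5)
Your proposal is correct and follows essentially the same route as the paper: induction on the number of $2$'s with Theorem~\ref{th:ml11} as the recursion, isolating the $j=0$ term, inserting the inductive hypothesis for $T(m+j,r-j)$, and closing with Chu--Vandermonde plus an alternating binomial identity. The only differences are cosmetic — you factor the coefficient as $\binom{m+j}{j}\binom{m+1}{r-j+1}$ rather than the paper's $\binom{\ell+1}{j}\binom{m+j}{\ell}$, and your final cancellation via the trinomial identity is a slightly cleaner packaging of the paper's factorial verification.
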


\noindent 
{\it Remark.}  
When $r=0$ in Theorem \ref{th:222111}, we have 
$$
\mathcal Y_n(\underbrace{1,\dots,1}_{m})=\frac{1}{n}\binom{n}{m+1}=\frac{1}{m+1}\binom{n-1}{m}=\mathfrak Z_n(\zeta_n;m,1)\,,
$$
which is \cite[Theorem 9]{Ko24}. 

When $r=1$ in Theorem \ref{th:222111}, we have 
$$
\mathcal Y_n(2,\underbrace{1,\dots,1}_{m-1})=m\cdot\frac{-(n-2 m-3)}{2(m+2)(m+1)}\binom{n-1}{m}=m\times \mathfrak{Re}\left(\mathfrak Z_n(\zeta_n;;\underbrace{\underbrace{1,\dots,1}_a,2,\underbrace{1,\dots,1}_b}_m)\right)\,, 
$$ 
which is equivalent to \cite[Theorem 1]{BIK}, as shown in (\ref{eq:11112}) or (\ref{eq:11112r}).  

When $r=m-1$ in Theorem \ref{th:222111}, we have 
\begin{equation}
\mathcal Y_n(\underbrace{2,\dots,2}_{m-1},1)
=\frac{1}{n}\left(\binom{n-1}{m}+(-1)^{m-1}\binom{n-1}{2 m}\right)\,.
\label{eq:222221}
\end{equation}

When $r=m$ in Theorem \ref{th:222111}, we have 
$$
\mathcal Y_n(\underbrace{2,\dots,2}_{m})=\frac{1}{(m+1)n}\left(\binom{n-1}{m}+(-1)^m\binom{n-1}{2 m+1}\right)\,,
$$
which is \cite[Theorem 5]{Ko25a}.

\begin{proof} 
As shown in Remark above, the identity in Theorem \ref{th:222111} is valid for $r=0$ (and $r=1$).   
Assume that the identity in Theorem \ref{th:222111} is valid for all $r$ with $0\le r\le\ell-1$ ($\ell\ge 1$).  Then by applying Theorem \ref{th:ml11}, (note that $\mathcal Y_n$ is nullified if the length of the string is negative.) we have
 
\begin{align}
&\mathcal Y_n(\underbrace{2,\dots,2}_{\ell},\underbrace{1,\dots,1}_{m-\ell})\notag\\
&=-\sum_{j=1}^{\ell}\binom{m-\ell+2 j}{j}\mathcal Y_n(\underbrace{2,\dots,2}_{\ell-j},\underbrace{1,\dots,1}_{m-\ell+2 j})
+\frac{1}{(m+1)(\ell+1)}\binom{n-1}{m}\binom{n-1}{\ell}\notag\\
&=-\sum_{j=1}^{\ell}\binom{m-\ell+2 j}{j}\frac{1}{(\ell-j+1)n}\binom{m+j}{\ell-j}\left(\binom{n-1}{m+j}+(-1)^{\ell-j}\binom{n-1}{m+\ell+1}\right)\notag\\
&\quad +\frac{1}{(\ell+1)n}\binom{n-1}{\ell}\binom{n}{m+1}\,. 
\label{eq:2222211111}
\end{align}
Here, we used the assumption for the identity in Theorem \ref{th:222111} by replacing $m$ by $m+j$.
     
Since 
\begin{align*}
\binom{m-\ell+2 j}{j}\frac{\ell+1}{(\ell-j+1)}\binom{m+j}{\ell-j}
&=\frac{(\ell+1) (m+j)!}{j!(m-\ell+j)! (\ell-j+1)! m!}\\
&=\binom{\ell+1}{j}\binom{m+j}{\ell}\,,
\end{align*}
the first term of the right-hand side of (\ref{eq:2222211111}) is written as 
$$
\frac{S_1+S_2}{(\ell+1)n}\,,
$$
where 
$$
S_1:=-\sum_{j=1}^{\ell}\binom{\ell+1}{j}\binom{m+j}{\ell}\binom{n-1}{m+j}
$$
and 
$$
S_2:=-\binom{n-1}{m+\ell+1}\sum_{j=1}^{\ell}(-1)^{\ell-j}\binom{\ell+1}{j}\binom{m+j}{\ell}\,.
$$ 
Since by the Chu--Vandermonde identity, 
\begin{align*} 
&\sum_{j=0}^{\ell}\binom{\ell+1}{j}\binom{m+j}{\ell}\binom{n-1}{m+j}\\
&=\frac{\ell+1}{n}\binom{n}{m+1}\sum_{j=0}^\ell\binom{m+1}{\ell-j+1}\binom{n-m-1}{j}\\
&=\frac{\ell+1}{n}\binom{n}{m+1}\left(\sum_{j=0}^{\ell+1}\binom{m+1}{\ell-j+1}\binom{n-m-1}{j}-\binom{n-m-1}{\ell+1}\right)\\
&=\frac{\ell+1}{n}\binom{n}{m+1}\left(\binom{n}{\ell+1}-\binom{n-m-1}{\ell+1}\right)\,,
\end{align*}
we have 
\begin{align*}
S_1&=-\sum_{j=0}^{\ell}\binom{\ell+1}{j}\binom{m+j}{\ell}\binom{n-1}{m+j}+\binom{m}{\ell}\binom{n-1}{m}\\
&=-\frac{\ell+1}{n}\binom{n}{m+1}\left(\binom{n}{\ell+1}-\binom{n-m-1}{\ell+1}\right)+\binom{m}{\ell}\binom{n-1}{m}\,.
\end{align*}
Using the identity 
$$
\sum_{j=0}^{\ell}(-1)^{\ell-j}\binom{\ell+1}{j}\binom{m+j}{\ell}
=\binom{m+\ell+1}{\ell} 
$$
and subtracting the term for $j=0$, we have 
$$
S_2=-\binom{n-1}{m+\ell+1}\left(\binom{m+\ell+1}{\ell}-(-1)^\ell\binom{m}{\ell}\right)\,.
$$ 
Therefore, 
the right-hand side of (\ref{eq:2222211111}) is equal to 
\begin{align*}
&\frac{S_1+S_2}{(\ell+1)n}+\frac{1}{(\ell+1)n}\binom{n-1}{\ell}\left(\binom{n-1}{m}+\binom{n-1}{m+1}\right)\\
&=-\frac{1}{n^2}\binom{n}{m+1}\left(\binom{n}{\ell+1}-\binom{n-m-1}{\ell+1}\right)+\frac{1}{(\ell+1)n}\binom{m}{\ell}\binom{n-1}{m}\\
&\quad-\frac{1}{(\ell+1)n}\binom{n-1}{m+\ell+1}\left(\binom{m+\ell+1}{\ell}-(-1)^\ell\binom{m}{\ell}\right)\\
&\quad +\frac{1}{(\ell+1)n}\binom{n-1}{\ell}\binom{n}{m+1}\\ 
&=\frac{1}{(\ell+1)n}\binom{m}{\ell}\left(\binom{n-1}{m}+(-1)^\ell\binom{n-1}{m+\ell+1}\right)\,.
\end{align*} 
Notice that all other terms are vanished because  
\begin{align*}
&-(\ell+1)\binom{n}{m+1}\left(\binom{n}{\ell+1}-\binom{n-m-1}{\ell+1}\right)\\
&\quad 
-n\binom{n-1}{m+\ell+1}\binom{m+\ell+1}{\ell}+n\binom{n-1}{\ell}\binom{n}{m+1}\\
&=-\frac{n!}{(m+1)!(n-m-1)!}\left(\frac{n!}{\ell!(n-\ell-1)!}-\frac{(n-m-1)!}{\ell!(n-m-\ell-2)!}\right)\\
&\quad +\frac{n!}{\ell!(m+1)!}\left(-\frac{1}{(n-m-\ell-2)!}+\frac{n!}{(n-\ell-1)!(n-m-1)!}\right)\\
&=0\,. 
\end{align*} 
Hence, the identity in Theorem \ref{th:222111} is also valid for $r=\ell$.  
By induction, the desired result has been proved.  
\end{proof}

\subsection{Products of the multiple zeta values with the same negative parity}  
Finally we shall consider the product of multiple zeta values with negative powers.  

\begin{Prop}
For positive integers $n$, $m$ and $\ell$, we have 
$$
\mathfrak Z_n(\zeta_n;;\underbrace{-1,\dots,-1}_m)\mathfrak Z_n(\zeta_n;;\underbrace{-1,\dots,-1}_\ell)=n^2\sum_{j=0}^{n-m-1}\binom{m-\ell+2 j}{j}\mathcal Y_n(\underbrace{2,\dots,2}_{n-m-j-1},\underbrace{1,\dots,1}_{m-\ell+2 j})\,. 
$$ 
\label{prp:ml-1-1}  
\end{Prop}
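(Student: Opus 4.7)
The plan is to adapt the combinatorial argument of Proposition \ref{prp:ml11} to the case of negative powers, the main new ingredient being the identity $u_1 u_2 \cdots u_{n-1} = 1/n$ that follows from $\prod_{r=1}^{n-1}(1-\zeta_n^r)=n$. First I would expand the left-hand side as a double sum over $m$-subsets $\{u_{k_1},\dots,u_{k_m}\}$ and $\ell$-subsets $\{u_{j_1},\dots,u_{j_\ell}\}$ of $\{u_1,\dots,u_{n-1}\}$, with general term $1/(u_{k_1}\cdots u_{k_m}u_{j_1}\cdots u_{j_\ell})$.

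The crucial step is to convert each such term into a monomial with positive exponents by multiplying and dividing by $\prod_{r=1}^{n-1}u_r^2=1/n^2$. This attaches to each $u_r$ the exponent $2-[u_r\in\{u_{k_i}\}]-[u_r\in\{u_{j_i}\}]$: the exponent equals $2$ on the complement of the union of the two index sets, $1$ on their symmetric difference, and $0$ on their intersection (where the factor thus disappears). Parametrizing by $j:=|\{u_{j_i}\}\setminus\{u_{k_i}\}|$, the intersection has size $\ell-j$, the set $\{u_{k_i}\}\setminus\{u_{j_i}\}$ has size $m-\ell+j$, and the complement of the union has size $n-m-j-1$. So each such pair yields a monomial of precisely the type contributing to $\mathcal Y_n(\underbrace{2,\dots,2}_{n-m-j-1},\underbrace{1,\dots,1}_{m-\ell+2j})$, with the overall factor $n^2$ accounting for $1/\prod_r u_r^2$. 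The range $0\le j\le n-m-1$ is exactly what is forced by the nonnegativity of $n-m-j-1$.

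The final step is to count multiplicity: a given target monomial determines the intersection, the symmetric difference and the complement of the union of the two index sets, but not how the symmetric difference (of size $m-\ell+2j$) splits into $\{u_{k_i}\}\setminus\{u_{j_i}\}$ (of size $m-\ell+j$) and $\{u_{j_i}\}\setminus\{u_{k_i}\}$ (of size $j$). There are exactly $\binom{m-\ell+2j}{j}$ such splits, producing the coefficient in the claimed formula. I do not foresee a genuine obstacle; the argument is fully parallel to that of Proposition \ref{prp:ml11}, with the only subtlety being the replacement of the single factor $1/(u_1\cdots u_{n-1})=n$ used there by the squared factor $(u_1\cdots u_{n-1})^{-2}=n^2$. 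If $m<\ell$, one should note that $\binom{m-\ell+2j}{j}$ automatically vanishes for $j<\ell-m$, so the formula continues to hold without a symmetry assumption.
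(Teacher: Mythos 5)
Your proposal is correct and follows essentially the same route as the paper: the paper likewise reduces everything to the combinatorics of Proposition \ref{prp:ml11} and converts the reciprocal terms into monomials by inserting $(u_1\cdots u_{n-1})^2=1/n^2$, so that exponents become $2$ on the complement of the union, $1$ on the symmetric difference, and $0$ on the intersection, with multiplicity $\binom{m-\ell+2j}{j}$. The only nitpick is your closing remark for $m<\ell$: for $j=0$ the coefficient $\binom{m-\ell}{0}=1$ does not vanish, and those terms are instead killed by the paper's convention that $\mathcal Y_n$ vanishes when a block length is negative.
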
  
\begin{proof}
From Proposition \ref{prp:ml11}, the same strings appear in the denominators.  By multiplying $(u_1\cdots u_{n-1})^2$, the same string of $1-\cdots-1$ appears as they are, and the string of $2-\cdots-2$ disapper. In addition, the hidden part appears as the string of $2-\cdots-2$, and such a length is $n-1-(\ell-j)-(m-\ell+2 j)=n-m-j-1$. 

For example, when $n=8$, 
$$
\frac{1}{u_1^2 u_2^2 u_3 u_4 u_5}=\frac{u_4 u_5 u_6^2 u_7^2}{(u_1\cdots u_7)^2}
=8^2(u_4 u_5 u_6^2 u_7^2)\,.
$$ 
Hence, by Proposition \ref{prp:ml11}, we have 
$$
\mathfrak Z_n(\zeta_n;;\underbrace{-1,\dots,-1}_m)\mathfrak Z_n(\zeta_n;;\underbrace{-1,\dots,-1}_\ell)=n^2\sum_{j=0}^{n-m-1}\binom{m-\ell+2 j}{j}\mathcal Y_n(\underbrace{2,\dots,2}_{n-m-j-1},\underbrace{1,\dots,1}_{m-\ell+2 j})\,. 
$$  
\end{proof}

From Proposition \ref{prp:ml-1-1}, by using Theorem \ref{th:-12} (\ref{eq:-1}), we immediately have the following.   

\begin{theorem}
We have 
$$
\sum_{j=0}^{n-m-1}\binom{m-\ell+2 j}{j}\mathcal Y_n(\underbrace{2,\dots,2}_{n-m-j-1},\underbrace{1,\dots,1}_{m-\ell+2 j})=\frac{1}{n^2}\binom{n}{m}\binom{n}{\ell}\,.
$$ 
\label{th:ml-1-1}
\end{theorem}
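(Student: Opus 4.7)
The plan is to use Proposition \ref{prp:ml-1-1} as the vehicle and to evaluate its left-hand side using the closed-form formula for $\mathfrak Z_n(\zeta_n;;\underbrace{-1,\dots,-1}_\ell)$ that was established earlier. First, I would invoke Proposition \ref{prp:ml-1-1}, which states
$$
\mathfrak Z_n(\zeta_n;;\underbrace{-1,\dots,-1}_m)\mathfrak Z_n(\zeta_n;;\underbrace{-1,\dots,-1}_\ell)=n^2\sum_{j=0}^{n-m-1}\binom{m-\ell+2 j}{j}\mathcal Y_n(\underbrace{2,\dots,2}_{n-m-j-1},\underbrace{1,\dots,1}_{m-\ell+2 j}).
$$
Next, I would apply Theorem \ref{th:-12}, equation (\ref{eq:-1}), to each of the two factors on the left, which produces $\binom{n}{m}\binom{n}{\ell}$. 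Dividing both sides of the resulting equation by $n^2$ gives exactly the identity claimed in Theorem \ref{th:ml-1-1}.

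Because this is a one-step deduction from results already in hand, there is no real obstacle. All of the combinatorial content is carried by Proposition \ref{prp:ml-1-1}, where one multiplies numerator and denominator by $(u_1\cdots u_{n-1})^2$ to convert the inverse factors coming from $\mathfrak Z_n(\zeta_n;;\underbrace{-1,\dots,-1}_\bullet)$ into numerator factors and reads off the resulting strings of exponents $2,\dots,2,1,\dots,1$. The nontrivial closed-form evaluation of each factor on the left is delivered by the generating-function argument behind Theorem \ref{th:-12}. Once both inputs are available, the theorem follows as an immediate corollary; the only thing worth checking is that the index ranges match, i.e.\ that $n\ge\max(m,\ell)$, which is compatible with the hypotheses on $m$ and $\ell$ already in place.
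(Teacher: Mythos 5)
Your proposal is correct and matches the paper's own argument exactly: the paper derives Theorem \ref{th:ml-1-1} immediately from Proposition \ref{prp:ml-1-1} together with identity (\ref{eq:-1}) of Theorem \ref{th:-12}, then divides by $n^2$. Nothing further is needed.
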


\section{Comments and future works}   
     
In this paper, we have succeeded in providing several explicit formulas in the form of polynomials for finite $q$-multiple zeta values with different indices, including the type $2-\cdots-2,1-\cdots-1$. This is a sum that includes all permutations, and it is not an explicit formula for each multiple zeta value itself. The average sum is frequently discussed in analytic number theory and is significant (see, e.g., \cite{HR92,Rosen02}), but ultimately it is also interesting to see what an explicit formula for each multiple zeta value itself would look like. However, since each multiple zeta value also has an imaginary part, handling them is currently very difficult.




\end{document}